\documentclass[11pt]{article}
\usepackage[a4paper,margin=27mm]{geometry}
\usepackage[T1]{fontenc}
\usepackage{lmodern,amsmath,amssymb,amsthm,mathtools,booktabs,graphicx,microtype}
\usepackage[numbers,sort&compress]{natbib}
\usepackage[colorlinks=true,linkcolor=blue,citecolor=blue,urlcolor=blue]{hyperref}
\newtheorem{theorem}{Theorem}[section]
\newtheorem{proposition}[theorem]{Proposition}
\newtheorem{lemma}[theorem]{Lemma}
\newtheorem{corollary}[theorem]{Corollary}
\theoremstyle{remark}
\newcommand{\R}{\mathbb R}
\newcommand{\K}{\mathcal K}
\newcommand{\norm}[1]{\left\|#1\right\|}
\newcommand{\ip}[2]{\left(#1,#2\right)_H}
\newcommand{\dd}{\mathrm d}
\newcommand{\tr}{\mathsf T}
\newcommand{\diam}{\operatorname{diam}}
\newcommand{\esssup}{\operatorname*{ess\,sup}}
\title{Designed short-time observations for rotational damping recovery in Timoshenko beams}
\author{Mohammad El-Hindi}
\date{}
\hypersetup{pdftitle={Designed short-time observations for rotational damping recovery in Timoshenko beams},pdfauthor={Mohammad El-Hindi}}
\begin{document}
\maketitle
\begin{abstract}
We study the recovery of spatial rotational damping in a clamped Timoshenko beam. The coefficient belongs to a prescribed bounded finite-dimensional family. One fixed spatial average of rotation is recorded in separately labelled experiments with known initial rotational velocities. We construct these velocities so that the leading coefficient-to-data map is an invertible Gram matrix. Testing the equations against the smooth observation kernel gives a uniform parameter-Lipschitz remainder without differentiating the damping coefficient. A weighted temporal moment then yields global injectivity, Lipschitz stability and a globally convergent projected fixed-point reconstruction for sufficiently short observation times. Upper and deterministic two-point lower bounds give the same noise-amplification order $T^{-5/2}$ for $L^2$ time-history errors. A second sensitivity estimate supplies a separate sufficient condition for strong convexity and global projected-gradient convergence. We derive continuous and exact discrete adjoints, prove finite-time convergence of a conforming finite-element midpoint scheme, and propagate forward error into coefficient error. Reproducible synthetic experiments examine symmetry, gradient accuracy, noise and discretization bias. 
\end{abstract}
\noindent\textbf{Keywords:} inverse damping; Timoshenko system; experiment design; finite measurements; quantitative stability; discrete adjoint.\\
\textbf{2020 Mathematics Subject Classification:} 35R30, 65M32, 74K10.

\section{Introduction}
A damping coefficient can be difficult to identify even when the beam response is accurately resolved. Some experiments suppress a coefficient direction by symmetry; others produce a nonzero but very small response. An adjoint computes the derivative of a data-fitting objective efficiently, but does not establish that the measurements distinguish the admissible coefficients. This paper addresses that distinction for rotational damping in a Timoshenko beam.

Inverse analysis for Timoshenko models is well established. Cavaterra \cite{Cavaterra1998} studied identification of two viscoelastic memory kernels. Karve et al.\ \cite{Karve2011} developed time-domain material-profile reconstruction for Timoshenko beams and frames using state, adjoint and control equations, finite elements, and regularization. Their unknown stiffness profiles and displacement data differ from the damping and designed rotation averages considered here. Kawano et al.\ \cite{Kawano2025} studied identification of spatial force components. Short observation intervals in beam inverse problems also have precedents: Kawano \cite{Kawano2017} proved a coefficient uniqueness result for an Euler--Bernoulli model.

For scalar waves, Hussein, Lesnic and Yamamoto \cite{Hussein2017} establish damping stability with boundary observations and suitable nonvanishing initial velocity, under hypotheses appropriate to Carleman estimates. Fang, Lesnic and Alosaimi \cite{Fang2023} combine analysis and adjoint reconstruction for damped waves with Robin conditions. Here the coefficient prior is finite dimensional, and the observations are designed rotation averages.

The closest general connection is the finite-measurement inverse theory of Alberti and Santacesaria \cite{Alberti2022}. Their framework relates finite-dimensional coefficient priors, injectivity, Lipschitz stability and finite observations. Here we verify the required information content directly: a prescribed initial velocity and a bounded rotation functional give an explicit leading matrix, and a pairwise remainder estimate controls the nonlinear map on the entire admissible parameter set. No full-rank sensitivity assumption is left to numerical verification.

The computational formulation follows the variational data-assimilation approach used by Lemke and collaborators for compressible flows and acoustic sources \cite{Lemke2015,LemkeStein2021}. The resistivity identification of H\"olter, Lemke and Weinzierl \cite{Hoelter2025} is a related coefficient example. We use this forward--adjoint formulation for beam damping and derive the coefficient gradient from the Timoshenko equations.

The main result uses one fixed rotation-average channel and $d$ separately prepared initial velocities for a $d$-parameter coefficient. A smooth observation kernel is essential to our proof: moving the elastic operator onto this kernel avoids spatial derivatives of the bounded damping basis. The leading matrix can be made positive definite by choosing the velocities from the basis. A temporal moment preserves this matrix, and the nonlinear remainder is a contraction for small enough time. This proves global reconstruction on the prescribed parameter set. It also shows why short time is not uniformly advantageous: contraction improves as $T$ decreases, while absolute $L^2$ observation errors are amplified by $T^{-5/2}$. A matching lower bound shows that this power cannot be removed by changing the estimator.

We also give a separate second-derivative criterion for strong convexity of normalized least squares. This criterion is more restrictive than the moment-contraction condition. Continuous adjoints, exact discrete adjoints and forward error estimates connect the theoretical inverse to a reproducible implementation. The numerical examples distinguish conclusions certified for the continuum problem from empirical behavior of a discretized optimizer at longer times.

\section{Model, experiments and forward estimates}\label{sec:forward}
Let $I=(0,L)$ and let $\rho_1,\rho_2,k,b$ be positive constants. Displacement $w$ and rotation $\theta$ satisfy
\begin{equation}\label{eq:pde}
\begin{split}
 \rho_1 w_{tt}-k(w_x+\theta)_x&=0,\\
 \rho_2\theta_{tt}-b\theta_{xx}+k(w_x+\theta)+a_\eta\theta_t&=0,
\end{split}\qquad (x,t)\in I\times(0,T),
\end{equation}
with $w=\theta=0$ at both endpoints. We use the plus sign in the shear $w_x+\theta$ throughout. The alternative convention is obtained by reversing the sign of rotation.

Let $\K\subset\R^d$ be compact and convex with nonempty interior. Suppose
\begin{equation}\label{eq:coeff}
 a_\eta=a_0+\sum_{j=1}^d\eta_j\psi_j,\qquad
 a_0,\psi_j\in L^\infty(I),\qquad
 0\le a_\eta\le a_*\quad\hbox{a.e. for every }\eta\in\K.
\end{equation}
All constants below may depend on this fixed family and on the physical parameters. They are not asserted to be uniform as $d$ increases or as a slender-beam limit is taken.

Use the mass Hilbert space $H=L^2(I)^2$ and the elastic space $V=H^1_0(I)^2$, with
\begin{align}
 \ip UZ&=\int_I(\rho_1 U_1Z_1+\rho_2 U_2Z_2)\,\dd x,\\
 \mathfrak a(U,Z)&=\int_I\{k(U_{1,x}+U_2)(Z_{1,x}+Z_2)+bU_{2,x}Z_{2,x}\}\,\dd x,
 \qquad \norm U_V^2=\mathfrak a(U,U).
\end{align}
Poincar\'e's inequality controls $U_2$ by $U_{2,x}$, and then $U_{1,x}$ by $U_{1,x}+U_2$ and $U_2$. Thus this is a norm equivalent to the usual $H^1_0$ norm. The positive self-adjoint elastic operator is
\begin{equation}\label{eq:A}
 AU=\left(-\frac{k}{\rho_1}(U_{1,x}+U_2)_x,
 \frac{-bU_{2,xx}+k(U_{1,x}+U_2)}{\rho_2}\right),
 \quad D(A)=(H^2(I)\cap H^1_0(I))^2.
\end{equation}
Indeed, the form defines a positive self-adjoint operator; its first equation gives $U_{1,xx}\in L^2$, and its second then gives $U_{2,xx}\in L^2$. This proves the domain assertion. The converse follows by integration by parts. The compact embedding $V\hookrightarrow H$ gives compact resolvent.

Define $D_\eta v=(0,a_\eta v_2/\rho_2)$, $X=V\times H$, and
\begin{equation}\label{eq:constantsD}
 d_*:=a_*/\rho_2,\qquad
 K_D:=\rho_2^{-1}\esssup_{x\in I}\Big(\sum_{j=1}^d|\psi_j(x)|^2\Big)^{1/2}.
\end{equation}
Then $D_\eta$ is bounded, self-adjoint and nonnegative on $H$, with
$\norm{D_\eta}\le d_*$ and $\norm{D_\eta-D_\zeta}\le K_D|\eta-\zeta|$.

Experiment $\ell$ has data
\begin{equation}\label{eq:initial}
 U_\eta^\ell(0)=0,\qquad U_{\eta,t}^\ell(0)=v_0^\ell=(0,g_\ell),
 \qquad g_\ell\in H^1_0(I),\qquad 1\le\ell\le d.
\end{equation}
The velocities are known and do not depend on the unknown coefficient. A single fixed channel records
\begin{equation}\label{eq:obs}
 CU=\int_I\chi U_2\,\dd x=\ip U\phi,\qquad
 \chi\in H^2(I)\cap H^1_0(I),\quad \phi=(0,\chi/\rho_2)\in D(A).
\end{equation}
Set $f(t,\eta)=(CU_\eta^\ell(t))_{\ell=1}^d$ and let
$F_T(\eta)\in Y_T:=L^2(0,T;\R^d)$ denote the history $t\mapsto f(t,\eta)$. A channel is a spatial average, not a point measurement. The $d$ experiments remain separately labelled.

\begin{proposition}[Forward solvability and uniform estimates]\label{prop:forward}
For every $\eta\in\K$, the equation $U''+AU+D_\eta U'=0$ has a unique energy solution for initial data in $X$. Its energy satisfies
\begin{equation}\label{eq:energy}
 E(t)+\int_0^t\int_I a_\eta|\theta_t|^2\,\dd x\,\dd s=E(0),\qquad
 E=\tfrac12(\norm U_V^2+\norm{U'}_H^2).
\end{equation}
For \eqref{eq:initial}, $(U,U')\in C([0,T];D(A)\times V)\cap C^1([0,T];X)$ and
\begin{equation}\label{eq:R}
 \norm{U'}_H\le R_{0\ell}:=\norm{v_0^\ell}_H,\qquad
 \norm{U''}_H\le R_{1\ell}:=(\norm{v_0^\ell}_V^2+d_*^2R_{0\ell}^2)^{1/2}.
\end{equation}
If a forcing $f\in L^1(0,T;H)$ is added to the abstract equation, the solution satisfies
\begin{equation}\label{eq:forced}
 \norm{(U(t),U'(t))}_X\le\norm{(U(0),U'(0))}_X+\int_0^t\norm{f(s)}_H\,\dd s.
\end{equation}
\end{proposition}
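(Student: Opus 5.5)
We recast the equation as a first-order system on $X=V\times H$ and use semigroup theory. Set $\mathcal A_\eta(U,W)=(W,-AU-D_\eta W)$ with domain $D(A)\times V$. The $X$-inner product $\mathfrak a(U,\tilde U)+\ip W{\tilde W}$ gives the energy norm, and for $(U,W)$ in the domain
\[
 \big(\mathcal A_\eta(U,W),(U,W)\big)_X=\mathfrak a(W,U)-\mathfrak a(U,W)-\ip{D_\eta W}W=-\ip{D_\eta W}W\le0 .
\]
So $\mathcal A_\eta$ is dissipative. The undamped operator $\mathcal A_0$ is skew-adjoint, because $A$ is positive self-adjoint with form domain $V$, and it generates a unitary group. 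The term $(0,-D_\eta W)$ is a bounded dissipative perturbation on $X$. By the Lumer--Phillips theorem and bounded perturbation, $\mathcal A_\eta$ generates a contraction semigroup. This gives existence and uniqueness of mild (energy) solutions for data in $X$, and classical solutions in $C([0,T];D(\mathcal A_\eta))\cap C^1([0,T];X)$ for data in $D(\mathcal A_\eta)$. The data \eqref{eq:initial} are $(0,v_0^\ell)$ with $v_0^\ell\in V$, which lie in $D(A)\times V$. This is the stated regularity.

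For the energy identity \eqref{eq:energy}, we first take classical solutions and differentiate $E(t)$. This gives $E'=\mathfrak a(U,U')+\ip{U''}{U'}=-\ip{D_\eta U'}{U'}=-\int_I a_\eta|\theta_t|^2\,\dd x$. Integrating in time yields \eqref{eq:energy}. We extend it to energy solutions by density of $D(\mathcal A_\eta)$ in $X$ and continuity of the semigroup. The right-hand side converges because $D_\eta$ is bounded on $H$ and $U'\to$ in $C([0,T];H)$.

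For \eqref{eq:R} we use the structure of the data. Since $U(0)=0$, the energy identity gives $\norm{U'(t)}_H^2\le 2E(t)\le 2E(0)=\norm{v_0^\ell}_H^2$. For the second bound, $V:=U'$ formally solves the same equation, with data $V(0)=v_0^\ell$ and $V'(0)=U''(0)=-AU(0)-D_\eta U'(0)=-D_\eta v_0^\ell$. Equivalently, $(U',U'')=T(t)\mathcal A_\eta(U_0,U_1)$ is the semigroup applied to $\mathcal A_\eta(0,v_0^\ell)=(v_0^\ell,-D_\eta v_0^\ell)\in X$, so it is an energy solution. Its energy is nonincreasing. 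Hence
\[
\norm{U''(t)}_H^2\le \norm{v_0^\ell}_V^2+\norm{D_\eta v_0^\ell}_H^2\le\norm{v_0^\ell}_V^2+d_*^2R_{0\ell}^2,
\]
which is $R_{1\ell}^2$.

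The forced estimate \eqref{eq:forced} follows from Duhamel's formula. The mild solution is $\mathbf U(t)=S(t)\mathbf U(0)+\int_0^tS(t-s)(0,f(s))\,\dd s$, and the embedding $H\ni f\mapsto(0,f)\in X$ is isometric. The contraction property then gives the bound directly. Alternatively, for smooth $f$ one computes $\frac{\dd}{\dd t}\norm{\mathbf U}_X\le\norm{f}_H$ and uses density in $L^1(0,T;H)$.

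The only mildly delicate points are two. First, we must justify that $(U',U'')$ is the energy solution with the stated data, which the identity $\frac{\dd}{\dd t}S(t)x=S(t)\mathcal A_\eta x$ handles. Second, the factor $\tfrac12$ in $E$ must be tracked, so that the constants are exactly $R_{0\ell}$ and $R_{1\ell}$ with no extra $\sqrt2$. Both come out correctly because $\norm{\mathbf U}_X^2=2E$.
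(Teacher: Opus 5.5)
Your proof is correct and follows essentially the same route as the paper. Both arguments build a contraction semigroup on $X=V\times H$ as a bounded perturbation of the conservative wave group, prove the energy identity for domain data and extend it by density, get \eqref{eq:forced} from Duhamel plus contractivity, and bound $U''$ by applying contractivity to $\mathcal A_\eta(0,v_0^\ell)=(v_0^\ell,-D_\eta v_0^\ell)$. The only difference is cosmetic: the paper constructs the damped evolution by successive approximation, with the $n$th term bounded by $(d_*T)^n/n!$, and reads contractivity off the energy identity, whereas you invoke dissipativity with Lumer--Phillips and bounded perturbation directly.
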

\begin{proof}
The conservative operator $G_{\rm c}(U,v)=(v,-AU)$ on $D(A)\times V$ generates an isometric wave group: expansion in the orthonormal eigenvectors of $A$ gives each scalar oscillator and preserves its energy. The bounded operator $(U,v)\mapsto(0,-D_\eta v)$ gives existence and uniqueness by successive approximation in its integral equation on $C([0,T];X)$. The iteration converges because the $n$th time-ordered integral is bounded by $(d_*T)^n/n!$. For data in $D(A)\times V$, bounded perturbation preserves the generator domain and gives a classical solution. Taking the $X$ inner product with $(U,U')$ yields \eqref{eq:energy}. Approximation by domain data extends the identity and contraction to all energy data. Variation of constants gives \eqref{eq:forced}.

For \eqref{eq:initial}, $x_0=(0,v_0^\ell)$ belongs to the generator domain, and
$G_\eta x_0=(v_0^\ell,-D_\eta v_0^\ell)$. The semigroup commutes with its generator on its domain. Applying contraction to $x_0$ and $G_\eta x_0$ proves \eqref{eq:R}. No assertion about exponential decay is used.
\end{proof}

\begin{lemma}[First and second sensitivities]\label{lem:sensitivity}
The state map into $C([0,T];X)$ and the observation map into $Y_T$ are twice continuously Fr\'echet differentiable on an open parameter neighborhood of $\K$. For $h,j\in\R^d$, their derivatives solve
\begin{align}
 Z_h''+AZ_h+D_\eta Z_h'&=-D_hU_\eta',\label{eq:sens1}\\
 W_{h,j}''+AW_{h,j}+D_\eta W_{h,j}'&=-D_hZ_j'-D_jZ_h',\label{eq:sens2}
\end{align}
with zero initial data, where $D_hv=(0,(\sum h_i\psi_i)v_2/\rho_2)$. On $\K$,
\begin{align}
 \norm{(Z_h(t),Z_h'(t))}_X&\le K_DR_{0\ell}t|h|,\label{eq:sensbound}\\
 \norm{W_{h,j}(t)}_H&\le\tfrac13K_D^2R_{0\ell}t^3|h||j|.\label{eq:secondbound}
\end{align}
Consequently,
\begin{equation}\label{eq:B}
 \norm{D^2F_T(\eta)[h,j]}_{Y_T}\le B T^{7/2}|h||j|,
 \quad B=\frac{K_D^2\norm\phi_H}{3\sqrt7}\Big(\sum_\ell R_{0\ell}^2\Big)^{1/2}.
\end{equation}
\end{lemma}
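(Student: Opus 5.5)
We treat the state map as the fixed point of a parameter-dependent linear evolution and obtain differentiability from the implicit function theorem, or equivalently from a direct difference-quotient argument. Write the first-order system as $x'=G_{\rm c}x+P_\eta x$, where $P_\eta(U,v)=(0,-D_\eta v)$. Its Duhamel form is $x=S_{\rm c}(\cdot)x_0+\int_0^\cdot S_{\rm c}(\cdot-s)P_\eta x(s)\,\dd s$, posed in $C([0,T];X)$. The map $\eta\mapsto P_\eta$ is affine and bounded with values in $\mathcal L(X)$, and this extends to an open neighborhood of $\K$, where $a_\eta$ is still bounded though possibly slightly negative. The fixed-point map $\Phi(x,\eta)$ is affine in $\eta$ and linear in $x$, hence $C^\infty$. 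The operator $I-\partial_x\Phi$ is invertible on $C([0,T];X)$ by the same Volterra/Neumann series estimate $(d T)^n/n!$ used in Proposition~\ref{prop:forward}. The implicit function theorem then gives $C^2$ (indeed $C^\infty$) dependence of $x_\eta$ on $\eta$. Differentiating the Duhamel identity once and twice in $\eta$ gives the mild forms of \eqref{eq:sens1} and \eqref{eq:sens2} with zero initial data. The cross terms appear because $\partial_\eta P_\eta[h]=P_h$ and $\partial_\eta^2P=0$. The observation map $C:U\mapsto\ip U\phi$ is bounded on $H$, so $F_T$ inherits $C^2$ regularity into $C([0,T];\R^d)\subset Y_T$.

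For the bounds we use \eqref{eq:forced} with $\eta\in\K$, so that the damped semigroup is a contraction. For \eqref{eq:sens1}, the forcing satisfies $\norm{D_hU_\eta'}_H\le K_D|h|\,\norm{U'}_H\le K_DR_{0\ell}|h|$ by \eqref{eq:R}. Integrating gives \eqref{eq:sensbound}, since $\norm{Z_h'}_H$ is dominated by the $X$ norm.

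For \eqref{eq:sens2}, the forcing is bounded by $2K_D^2R_{0\ell}s|h||j|$, so \eqref{eq:forced} gives $\norm{(W,W')}_X\le K_D^2R_{0\ell}t^2|h||j|$. This yields only a $t^2$ bound on the $X$ norm, while we need $\norm W_H\le\frac13K_D^2R_{0\ell}t^3|h||j|$. We therefore bound $W$ in $H$ directly: $\norm{W(t)}_H\le\int_0^t\norm{W'}_H\le\int_0^tK_D^2R_{0\ell}s^2\,\dd s|h||j|=\frac13K_D^2R_{0\ell}t^3|h||j|$, using $W(0)=0$. This is the point requiring care: one must control $\norm{W'}_H$ by the energy bound and not use the $V$ norm of $W$. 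If the constant came out off by a factor, we would revisit whether \eqref{eq:forced} is applied with the factor 2 absorbed; here it is absorbed because $\int_0^t2s\,\dd s=t^2$.

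Finally, $|D^2f_\ell(t)[h,j]|=|\ip{W^\ell_{h,j}(t)}\phi|\le\norm\phi_H\tfrac13K_D^2R_{0\ell}t^3|h||j|$. Squaring, summing over $\ell$ and integrating $\int_0^Tt^6\,\dd t=T^7/7$ gives \eqref{eq:B} with the stated $B$. The main obstacle is establishing differentiability on an open neighborhood, where nonnegativity of $a_\eta$ may fail. This is handled by the growth bound $e^{cT}$ from successive approximation, which suffices for differentiability. The sharp constants are asserted only on $\K$, where contraction holds.
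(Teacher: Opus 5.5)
Your proposal is correct and follows the paper's proof. The bounds \eqref{eq:sensbound}, \eqref{eq:secondbound} and \eqref{eq:B} are obtained exactly as there: the forced estimate with \eqref{eq:R}, then the $2K_D^2R_{0\ell}s|h||j|$ forcing bound, then integrating $W'$ in $H$, then $\int_0^Tt^6\,\dd t=T^7/7$. The only variation is in the differentiability step, where you apply the implicit function theorem to the Duhamel fixed point and get $C^2$ dependence together with continuity of the derivatives in one stroke; the paper instead checks the Fr\'echet remainders directly with the forced estimate, and both routes rest on the same Volterra/exponential bound away from $\K$.
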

\begin{proof}
For $z=U_{\eta+h}-U_\eta$, variation of constants gives $z''+Az+D_\eta z'=-D_hU_{\eta+h}'$. Subtracting \eqref{eq:sens1} leaves forcing $-D_hz'$. The forced estimate first bounds $z$ in $C(X)$ by $C|h|$, then the remainder by $C|h|^2$. This proves the first derivative. Differences of \eqref{eq:sens1}, followed by the same remainder argument, give \eqref{eq:sens2} and continuity of both derivatives. On an open bounded neighborhood one may allow a negative part of $a_\eta$; the integral equation gives a uniform exponential bound, sufficient for these differentiability arguments.

On $\K$, \eqref{eq:forced} and \eqref{eq:R} give \eqref{eq:sensbound}. The forcing in \eqref{eq:sens2} has norm at most $2K_D^2R_{0\ell}t|h||j|$. Hence $\norm{(W,W')}_X\le K_D^2R_{0\ell}t^2|h||j|$. Integrating $W'$ in $H$ proves \eqref{eq:secondbound}. Applying $C$ and integrating $t^6$ gives \eqref{eq:B}.
\end{proof}

\section{A pairwise short-time estimate}\label{sec:expansion}
Define the experiment matrix and constants
\begin{align}
 \Gamma_{\ell j}&=\int_I\chi g_\ell\psi_j\,\dd x,\label{eq:gram}\\
 A_{3\ell}&=\tfrac16K_D(d_*R_{0\ell}+R_{1\ell})\norm\phi_H,
 &A_{4\ell}&=\tfrac1{24}K_DR_{0\ell}\norm{A\phi}_H,\label{eq:remainderconstants}\\
 A_3&=(\textstyle\sum_\ell A_{3\ell}^2)^{1/2},
 &A_4&=(\textstyle\sum_\ell A_{4\ell}^2)^{1/2}.\nonumber
\end{align}
\begin{theorem}[Uniform parameter-Lipschitz expansion]\label{thm:expansion}
Under \eqref{eq:coeff}--\eqref{eq:obs}, for all $\eta,\zeta\in\K$ and $t\ge0$,
\begin{equation}\label{eq:expansion}
 \left|f(t,\eta)-f(t,\zeta)+\frac{t^2}{2\rho_2}\Gamma(\eta-\zeta)\right|
 \le(A_3t^3+A_4t^4)|\eta-\zeta|.
\end{equation}
\end{theorem}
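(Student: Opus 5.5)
The plan is to work with the difference of the two states for each fixed experiment $\ell$ and to test its equation against the smooth kernel $\phi$. This moves $A$ onto $\phi\in D(A)$, so no spatial derivative of $a_\eta$ or $\psi_j$ is ever needed. Fix $\ell$, write $U_\eta=U^\ell_\eta$, $R_0=R_{0\ell}$, $R_1=R_{1\ell}$, and set $z=U_\eta-U_\zeta$. By linearity, $z$ has zero initial data and solves
\[
 z''+Az+D_\eta z'=-(D_\eta-D_\zeta)U_\zeta' .
\]
By Proposition~\ref{prop:forward}, both states lie in $C([0,T];D(A)\times V)\cap C^1([0,T];X)$, so $z''\in C([0,T];H)$. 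Hence $m(t):=\ip{z(t)}{\phi}=f_\ell(t,\eta)-f_\ell(t,\zeta)$ is $C^2$, with $m(0)=m'(0)=0$. Using self-adjointness of $A$ and $\phi\in D(A)$,
\[
 m''(t)=-\ip{z}{A\phi}-\ip{D_\eta z'}{\phi}-\ip{(D_\eta-D_\zeta)U_\zeta'}{\phi}.
\]

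Next I isolate the leading term by writing $U_\zeta'(s)=v_0^\ell+(U_\zeta'(s)-v_0^\ell)$. The definitions of the $H$ inner product, $D_h$ and $\phi=(0,\chi/\rho_2)$ give
\[
 \ip{(D_\eta-D_\zeta)v_0^\ell}{\phi}
 =\frac1{\rho_2}\int_I\chi g_\ell\sum_j(\eta_j-\zeta_j)\psi_j\,\dd x
 =\frac{(\Gamma(\eta-\zeta))_\ell}{\rho_2}.
\]
This constant term integrates twice to exactly $-\tfrac{t^2}{2\rho_2}(\Gamma(\eta-\zeta))_\ell$, which produces the leading term in \eqref{eq:expansion}. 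The three remaining terms of $m''$ are bounded pointwise in $s$ as follows.
\begin{itemize}
\item Since $\norm{U_\zeta''}_H\le R_1$ by \eqref{eq:R}, we have $\norm{U_\zeta'(s)-v_0^\ell}_H\le R_1 s$. Therefore $|\ip{(D_\eta-D_\zeta)(U_\zeta'-v_0^\ell)}{\phi}|\le K_DR_1 s\norm\phi_H|\eta-\zeta|$.
\item The forced estimate \eqref{eq:forced}, with forcing norm at most $K_D|\eta-\zeta|R_0$, gives $\norm{z'(s)}_H\le K_DR_0 s|\eta-\zeta|$; this is the same bound as \eqref{eq:sensbound}. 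Hence $|\ip{D_\eta z'}{\phi}|\le d_*K_DR_0 s\norm\phi_H|\eta-\zeta|$.
\item Integrating the previous bound in $H$ gives $\norm{z(s)}_H\le\tfrac12K_DR_0s^2|\eta-\zeta|$. Hence $|\ip{z}{A\phi}|\le\tfrac12K_DR_0 s^2\norm{A\phi}_H|\eta-\zeta|$.
\end{itemize}

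Since $m(0)=m'(0)=0$, we can use $m(t)=\int_0^t(t-s)m''(s)\,\dd s$. The kernel moments $\int_0^t(t-s)s\,\dd s=t^3/6$ and $\int_0^t(t-s)s^2\,\dd s=t^4/12$ then give
\[
 \Big|m(t)+\tfrac{t^2}{2\rho_2}(\Gamma(\eta-\zeta))_\ell\Big|
 \le \big(A_{3\ell}t^3+A_{4\ell}t^4\big)|\eta-\zeta|,
\]
with precisely the constants of \eqref{eq:remainderconstants}. The vector estimate follows by taking the Euclidean norm over $\ell$ and applying Minkowski's inequality in $\ell^2$. This yields $(\sum_\ell(A_{3\ell}t^3+A_{4\ell}t^4)^2)^{1/2}\le A_3t^3+A_4t^4$.

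The only delicate step is justifying the twice-differentiated identity for $m$ without differentiating $a_\eta$. This is exactly where the regularity $(U,U')\in C(D(A)\times V)\cap C^1(X)$ from Proposition~\ref{prop:forward} and the assumption $\chi\in H^2\cap H^1_0$ enter. The bounds themselves use only the uniform estimates \eqref{eq:R} and \eqref{eq:forced}, which hold on all of $\K$. That is why the estimate is pairwise and global rather than local.
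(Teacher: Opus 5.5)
Your proposal is correct and matches the paper's proof step for step. It uses the same difference equation for $z=U_\eta^\ell-U_\zeta^\ell$, the same test against $\phi\in D(A)$ so that $A$ falls on $\phi$ rather than on the damping, the same splitting $U_\zeta'=v_0^\ell+(U_\zeta'-v_0^\ell)$, and the same three pointwise bounds integrated twice to give $A_{3\ell}$ and $A_{4\ell}$. The only cosmetic difference is that you apply the kernel $(t-s)$ to $m''$ and then subtract the leading term, whereas the paper puts $\tfrac{t^2}{2}\ip{(D_\eta-D_\zeta)v_0^\ell}{\phi}$ into $r_\ell$ before differentiating.
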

\begin{proof}
Put $h=\eta-\zeta$ and $z^\ell=U_\eta^\ell-U_\zeta^\ell$. Then
\[
 z''+Az+D_\eta z'=-(D_\eta-D_\zeta)U_\zeta',\qquad z(0)=z'(0)=0.
\]
The energy estimate and integration of $z'$ give
\begin{equation}\label{eq:pairbounds}
 \norm{z'(t)}_H\le K_DR_{0\ell}t|h|,\qquad
 \norm{z(t)}_H\le\tfrac12K_DR_{0\ell}t^2|h|.
\end{equation}
Also $\norm{U_\zeta'(t)-v_0^\ell}_H\le R_{1\ell}t$. Let
\[
 r_\ell(t)=\ip{z(t)}\phi+\frac{t^2}{2}\ip{(D_\eta-D_\zeta)v_0^\ell}\phi.
\]
Testing against $\phi\in D(A)$, self-adjointness of $A$ yields
\[
 r_\ell''=-\ip z{A\phi}-\ip{D_\eta z'}\phi
       -\ip{(D_\eta-D_\zeta)(U_\zeta'-v_0^\ell)}\phi.
\]
All pairings are continuous in time. There is no application of $A$ to a product with $a_\eta$. Therefore
\[
 |r_\ell''(t)|\le K_D|h|\left\{
 \tfrac12R_{0\ell}\norm{A\phi}_H t^2+
 (d_*R_{0\ell}+R_{1\ell})\norm\phi_H t\right\}.
\]
Since $r_\ell(0)=r_\ell'(0)=0$, integration twice gives
$|r_\ell(t)|\le(A_{3\ell}t^3+A_{4\ell}t^4)|h|$.
Finally $\ip{(D_\eta-D_\zeta)v_0^\ell}\phi
=\rho_2^{-1}(\Gamma h)_\ell$. Taking the Euclidean norm and using the triangle inequality proves \eqref{eq:expansion}.
\end{proof}

The factor $|\eta-\zeta|$ in the remainder is decisive. An expansion with a remainder bounded independently of this difference would not prove uniqueness for nearby coefficients.

\begin{corollary}[Two explicit designs]\label{cor:design}
If $\chi\psi_\ell\in H^1_0(I)$, choose $g_\ell=\chi\psi_\ell$. Then
\begin{equation}\label{eq:SPD}
 \Gamma_{\ell j}=\int_I\chi^2\psi_\ell\psi_j,\qquad
 h^\tr\Gamma h=\int_I\chi^2\Big(\sum_jh_j\psi_j\Big)^2.
\end{equation}
It is positive definite whenever the basis is linearly independent on the set where $\chi\ne0$. Alternatively, for $\psi_j=1_{I_j}$ on disjoint subintervals, choose a smooth sensor positive in every $I_j$ and smooth nonnegative $g_j$ supported inside $I_j$, with $\int\chi g_j>0$. Then $\Gamma$ is positive diagonal.
\end{corollary}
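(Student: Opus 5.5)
The corollary is essentially a computation with the definition \eqref{eq:gram} of $\Gamma$, followed by a positivity argument in each of the two designs. No dynamical estimate is needed. Theorem~\ref{thm:expansion} is not used here; the corollary only certifies that the leading matrix appearing there is invertible.

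\emph{First design.} The choice $g_\ell=\chi\psi_\ell$ is admissible in \eqref{eq:initial} exactly because of the hypothesis $\chi\psi_\ell\in H^1_0(I)$. Substituting it into \eqref{eq:gram} gives $\Gamma_{\ell j}=\int_I\chi^2\psi_\ell\psi_j$, which is symmetric. Expanding the quadratic form by bilinearity gives
\[
 h^\tr\Gamma h=\sum_{\ell,j}h_\ell h_j\int_I\chi^2\psi_\ell\psi_j=\int_I\chi^2\Big(\sum_jh_j\psi_j\Big)^2 ,
\]
and the integrand is nonnegative, so $\Gamma$ is positive semidefinite. For definiteness, suppose $h^\tr\Gamma h=0$. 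Then $\chi^2(\sum_j h_j\psi_j)^2=0$ a.e., so $\sum_jh_j\psi_j=0$ a.e.\ on $\{\chi\ne0\}$. I read ``linearly independent on the set where $\chi\ne0$'' as linear independence of the restrictions of the $\psi_j$ to $\{\chi\ne0\}$, modulo null sets; this is the only point that needs care. Under that reading the relation forces $h=0$. Positive definiteness of a symmetric matrix then gives invertibility.

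\emph{Second design.} Let $\psi_j=1_{I_j}$ with the $I_j$ disjoint, and let $g_j$ be smooth, nonnegative, and supported in a compact subset of $I_j$. Then $g_j\in H^1_0(I)$, so the data are admissible in \eqref{eq:initial}. Because $g_\ell$ vanishes outside $I_\ell$ and $I_\ell\cap I_j=\emptyset$ for $\ell\ne j$, we get
\[
 \Gamma_{\ell j}=\int_{I_j}\chi g_\ell=0\qquad(\ell\ne j),
\]
so $\Gamma$ is diagonal with entries $\Gamma_{jj}=\int_I\chi g_j$. These entries are positive by the stipulation $\int\chi g_j>0$. This is automatic when $g_j\not\equiv0$, since $\chi>0$ on $I_j$ and $g_j\ge0$.

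A positive sensor $\chi\in H^2(I)\cap H^1_0(I)$ exists, for example a smooth bump that is positive on the union of the $I_j$ and vanishes at $0$ and $L$. This requires each $I_j$ to be an open subinterval, which I would state. A positive diagonal matrix is then invertible.
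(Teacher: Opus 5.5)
Your proposal is correct and follows the paper's proof: the Gram identity $h^\tr\Gamma h=\int_I\chi^2(\sum_jh_j\psi_j)^2$ gives definiteness under linear independence on $\{\chi\ne0\}$, and disjoint supports make $\Gamma$ diagonal with entries $\int_I\chi g_j>0$. You also spell out details the paper's brief proof leaves implicit: the a.e.\ reading of linear independence, and the $H^1_0$ admissibility of the prepared velocities.
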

\begin{proof}
The first conclusion is the Gram identity \eqref{eq:SPD}. In the second construction, disjoint supports make every off-diagonal entry zero and the diagonal entries strictly positive. Both designs satisfy the preceding theorem. In particular, piecewise constant damping is allowed even though the first design need not produce $H^1$ velocities for a discontinuous basis.
\end{proof}
A local sensor is possible only if the resulting matrix is invertible. For example, the first design works for a polynomial basis on any open interval on which $\chi$ is nonzero. The second design requires the sensor to meet every cell. The example in Section~\ref{sec:numerics} has full spatial support; it is not a proof of localized sensing for arbitrary piecewise constant coefficients.

\section{Global recovery, finite moments and noise}\label{sec:inverse}
Assume $\Gamma$ is invertible, and set $\sigma=\sigma_{\min}(\Gamma)>0$. Choose any known $\eta_{\rm ref}\in\K$. Define
\begin{align}
 \mathcal Q_T y&=-\frac{10\rho_2}{T^5}\int_0^Tt^2y(t)\,\dd t,
 &\norm{\mathcal Q_T}&=\sqrt{20}\rho_2T^{-5/2},\label{eq:momentop}\\
 M_T(\eta)&=\mathcal Q_T(F_T(\eta)-F_T(\eta_{\rm ref})),\label{eq:moment}\\
 L_R(T)&=\rho_2\left(\frac53A_3T+\frac{10}7A_4T^2\right),
 &\kappa_T&=L_R(T)/\sigma.\label{eq:kappa}
\end{align}
The moment uses integration rather than differentiation of measured time histories.

\begin{theorem}[Global stability and constructive recovery]\label{thm:inverse}
If $\kappa_T<1$, then $F_T$ is globally injective on $\K$, and
\begin{equation}\label{eq:bilipschitz}
 c_TT^{5/2}|\eta-\zeta|\le\norm{F_T(\eta)-F_T(\zeta)}_{Y_T}
 \le C_TT^{5/2}|\eta-\zeta|,
\end{equation}
where
\begin{equation}\label{eq:cC}
 c_T=\frac{\sigma-L_R(T)}{\sqrt{20}\rho_2},\qquad
 C_T=\frac{\norm\Gamma}{\sqrt{20}\rho_2}+\frac{A_3T}{\sqrt7}+\frac{A_4T^2}{3}.
\end{equation}
For arbitrary data $y^\delta\in Y_T$, set $d^\delta=\mathcal Q_T(y^\delta-F_T(\eta_{\rm ref}))$. The iteration
\begin{equation}\label{eq:iteration}
 \eta^{m+1}=\Pi_\K\left[\eta^m+\Gamma^{-1}\{d^\delta-M_T(\eta^m)\}\right]
\end{equation}
converges from every $\eta^0\in\K$ to a unique fixed point $\eta^\delta$. If
$\norm{y^\delta-F_T(\eta^\dagger)}_{Y_T}\le\delta$ for $\eta^\dagger\in\K$, then
\begin{equation}\label{eq:inverseerror}
 |\eta^\delta-\eta^\dagger|\le\frac{\delta}{c_TT^{5/2}},\qquad
 |\eta^m-\eta^\delta|\le\kappa_T^m|\eta^0-\eta^\delta|.
\end{equation}
\end{theorem}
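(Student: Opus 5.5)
The plan is to apply the moment operator $\mathcal Q_T$ to the pairwise expansion of Theorem~\ref{thm:expansion} and treat everything else as a contraction argument. Write $f(t,\eta)-f(t,\zeta)=-\frac{t^2}{2\rho_2}\Gamma h+e(t)$ with $h=\eta-\zeta$ and $|e(t)|\le(A_3t^3+A_4t^4)|h|$. The weight in $\mathcal Q_T$ is normalized so that
\[
\mathcal Q_T\Big[-\tfrac{t^2}{2\rho_2}\Gamma h\Big]=\tfrac{10}{2T^5}\cdot\tfrac{T^5}{5}\,\Gamma h=\Gamma h .
\]
For the remainder,
\[
|\mathcal Q_T e|\le\tfrac{10\rho_2}{T^5}\int_0^T\big(A_3t^5+A_4t^6\big)\,\dd t\,|h|=\rho_2\Big(\tfrac{10}{6}A_3T+\tfrac{10}{7}A_4T^2\Big)|h|=L_R(T)|h| .
\]
Hence $M_T(\eta)-M_T(\zeta)=\Gamma h+\mathcal R(\eta,\zeta)$ with $|\mathcal R|\le L_R(T)|h|$, because the reference term cancels in the difference.

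For the lower bound in \eqref{eq:bilipschitz}, I combine this with the operator norm of $\mathcal Q_T$. The norm is obtained by Cauchy--Schwarz, $\norm{t^2}_{L^2(0,T)}=T^{5/2}/\sqrt5$, so $\norm{\mathcal Q_T}=10\rho_2T^{-5}\cdot T^{5/2}/\sqrt5=\sqrt{20}\rho_2T^{-5/2}$. Then
\[
(\sigma-L_R)|h|\le|M_T(\eta)-M_T(\zeta)|\le\sqrt{20}\rho_2T^{-5/2}\norm{F_T(\eta)-F_T(\zeta)}_{Y_T},
\]
which gives the lower bound with $c_T$ and, since $\kappa_T<1$, global injectivity. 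The upper bound follows directly from the expansion in $L^2(0,T)$ by Minkowski's inequality:
\[
\norm{\tfrac{t^2}{2\rho_2}\Gamma h}=\tfrac{\norm\Gamma}{2\rho_2\sqrt5}T^{5/2}|h|,\qquad \norm{A_3t^3}=\tfrac{A_3}{\sqrt7}T^{7/2},\qquad \norm{A_4t^4}=\tfrac{A_4}{3}T^{9/2}.
\]
Since $2\sqrt5=\sqrt{20}$, this reproduces $C_T$ exactly.

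For the iteration, set $\Phi(\eta)=\eta+\Gamma^{-1}(d^\delta-M_T(\eta))$. Then $\Phi(\eta)-\Phi(\zeta)=h-\Gamma^{-1}(\Gamma h+\mathcal R)=-\Gamma^{-1}\mathcal R$, so $|\Phi(\eta)-\Phi(\zeta)|\le L_R|h|/\sigma=\kappa_T|h|$, using $\norm{\Gamma^{-1}}=1/\sigma$. The metric projection $\Pi_\K$ onto the compact convex set $\K$ is nonexpansive. Therefore $\Pi_\K\circ\Phi$ is a $\kappa_T$-contraction of the complete set $\K$ into itself, and Banach's theorem gives the unique fixed point $\eta^\delta$ together with the rate $\kappa_T^m$.

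The main obstacle is the error bound, because $\eta^\delta$ is a fixed point of the projected map rather than an exact solution of $M_T(\eta)=d^\delta$. I will handle this with the variational characterization of the projection. Let $\epsilon=d^\delta-M_T(\eta^\dagger)=\mathcal Q_T(y^\delta-F_T(\eta^\dagger))$, so that $|\epsilon|\le\sqrt{20}\rho_2T^{-5/2}\delta$. The point $\eta^\dagger$ is a fixed point of the unperturbed map $\Pi_\K[\eta+\Gamma^{-1}(M_T(\eta^\dagger)-M_T(\eta))]$. Comparing the two maps, using nonexpansiveness and the $\kappa_T$ contraction, gives
\[
|\eta^\delta-\eta^\dagger|\le\kappa_T|\eta^\delta-\eta^\dagger|+|\Gamma^{-1}\epsilon|,
\]
hence $|\eta^\delta-\eta^\dagger|\le|\epsilon|/(\sigma(1-\kappa_T))=|\epsilon|/(\sigma-L_R)$. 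Inserting the bound on $|\epsilon|$ gives exactly $\delta/(c_TT^{5/2})$. Before finishing I will check that this projection-comparison step involves no hidden metric mismatch, since $\Gamma^{-1}$ is not symmetric in general and the projection is Euclidean. The nonexpansiveness argument used above does not require symmetry, so I expect it to go through.
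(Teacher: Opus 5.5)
Your proposal is correct and follows the paper's own proof essentially step for step. Like the paper, you take the moment of the pairwise expansion to obtain $\Gamma h$ plus an $L_R(T)$-Lipschitz remainder, use Cauchy--Schwarz for $\norm{\mathcal Q_T}$ and Minkowski for the two bounds, use nonexpansiveness of $\Pi_\K$ for the $\kappa_T$-contraction, and compare the noisy and exact fixed-point equations for the error bound. Your concern about a metric mismatch does not arise: the argument uses only Euclidean nonexpansiveness of $\Pi_\K$ and $\norm{\Gamma^{-1}}=1/\sigma$, which holds for any invertible $\Gamma$, so the variational characterization of the projection is never needed.
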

\begin{proof}
Integrating \eqref{eq:expansion} against $-10\rho_2t^2/T^5$ gives
\[
 M_T(\eta)=\Gamma(\eta-\eta_{\rm ref})+R_T(\eta),\quad
 R_T(\eta_{\rm ref})=0,\quad
 |R_T(\eta)-R_T(\zeta)|\le L_R(T)|\eta-\zeta|.
\]
Here $\int_0^Tt^4\dd t=T^5/5$, $\int_0^Tt^5\dd t=T^6/6$ and $\int_0^Tt^6\dd t=T^7/7$ give all factors. It follows that
$|M_T(\eta)-M_T(\zeta)|\ge(\sigma-L_R)|\eta-\zeta|$.
Cauchy--Schwarz gives the norm in \eqref{eq:momentop}, and hence the lower bound in \eqref{eq:bilipschitz}. Integrating the three terms of \eqref{eq:expansion} in $L^2$ and applying Minkowski gives the upper bound.

The argument of the projection in \eqref{eq:iteration} equals
$\eta_{\rm ref}+\Gamma^{-1}(d^\delta-R_T(\eta^m))$. Projection onto a closed convex Euclidean set is nonexpansive. Thus the update is a self-map of $\K$ with contraction factor $\kappa_T$. The contraction mapping theorem gives its unique fixed point and convergence. With exact data, $\eta^\dagger$ is a fixed point. Comparing the noisy and exact fixed-point equations gives
\[
 |\eta^\delta-\eta^\dagger|
 \le\kappa_T|\eta^\delta-\eta^\dagger|
 +\sigma^{-1}\norm{\mathcal Q_T}\delta,
\]
which is \eqref{eq:inverseerror}.
\end{proof}

Only $d$ scalar temporal moments, one per experiment, are needed for this theorem. The noisy fixed point need not minimize the full-history least-squares objective.

\begin{proposition}[Inexact forward solves]\label{prop:inexact}
Assume $\kappa_T<1$, $\eta^\dagger\in\K$ and $\norm{y^\delta-F_T(\eta^\dagger)}_{Y_T}\le\delta$. Suppose $\sup_{\eta\in\K}\norm{\widetilde F_T(\eta)-F_T(\eta)}_{Y_T}\le\varepsilon$. Replace \eqref{eq:iteration} by its reference-free form
\[
 \eta^{m+1}=\Pi_\K[\eta^m+\Gamma^{-1}\mathcal Q_T(y^\delta-\widetilde F_T(\eta^m))].
\]
Then
\begin{equation}\label{eq:inexact}
 |\eta^m-\eta^\dagger|\le\kappa_T^m|\eta^0-\eta^\dagger|
 +\frac{\delta+\varepsilon}{c_TT^{5/2}}(1-\kappa_T^m).
\end{equation}

\end{proposition}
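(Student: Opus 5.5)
We compare the inexact iteration with the exact-data fixed-point map of Theorem~\ref{thm:inverse}, treat the data and forward errors as a bounded perturbation, and unroll the resulting contraction recursion. First note that, since $\mathcal Q_T$ is linear,
\[
 \mathcal Q_T(y^\delta-\widetilde F_T(\eta))=d^\delta-M_T(\eta)+\mathcal Q_T(F_T(\eta)-\widetilde F_T(\eta)).
\]
Hence the reference-free update is the map $\Phi$ of \eqref{eq:iteration}, with an additional perturbation inside the projection. Define the exact-data map
\[
 \Phi^\dagger(\eta)=\Pi_\K\bigl[\eta_{\rm ref}+\Gamma^{-1}(\mathcal Q_T(F_T(\eta^\dagger)-F_T(\eta_{\rm ref}))-R_T(\eta))\bigr].
\]
The proof of Theorem~\ref{thm:inverse} shows that $\Phi^\dagger$ is a $\kappa_T$-contraction with fixed point $\eta^\dagger$. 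The fixed-point property holds because $M_T(\eta^\dagger)=\Gamma(\eta^\dagger-\eta_{\rm ref})+R_T(\eta^\dagger)$, so the argument of $\Pi_\K$ at $\eta^\dagger$ equals $\eta^\dagger\in\K$.

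The inexact update is
\[
 \eta^{m+1}=\Pi_\K\bigl[\eta_{\rm ref}+\Gamma^{-1}(\mathcal Q_T(F_T(\eta^\dagger)-F_T(\eta_{\rm ref}))-R_T(\eta^m))+\Gamma^{-1}e_m\bigr],
\]
where
\[
 e_m=\mathcal Q_T(y^\delta-F_T(\eta^\dagger))+\mathcal Q_T(F_T(\eta^m)-\widetilde F_T(\eta^m)).
\]
Using \eqref{eq:momentop} together with the bounds on $\delta$ and $\varepsilon$ gives $|e_m|\le\sqrt{20}\rho_2T^{-5/2}(\delta+\varepsilon)$. Projection onto $\K$ is nonexpansive and $R_T$ is $L_R$-Lipschitz. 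Therefore
\[
 |\eta^{m+1}-\eta^\dagger|\le\kappa_T|\eta^m-\eta^\dagger|+\sigma^{-1}\sqrt{20}\rho_2T^{-5/2}(\delta+\varepsilon).
\]
The key identity is
\[
 \frac{\sqrt{20}\rho_2}{\sigma T^{5/2}}=\frac{1-\kappa_T}{c_TT^{5/2}},
\]
which holds because $c_T=(\sigma-L_R)/(\sqrt{20}\rho_2)=\sigma(1-\kappa_T)/(\sqrt{20}\rho_2)$.

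Unrolling the recursion gives
\[
 |\eta^m-\eta^\dagger|\le\kappa_T^m|\eta^0-\eta^\dagger|+\frac{(1-\kappa_T)(\delta+\varepsilon)}{c_TT^{5/2}}\sum_{i=0}^{m-1}\kappa_T^i.
\]
The geometric sum equals $(1-\kappa_T^m)/(1-\kappa_T)$, which yields \eqref{eq:inexact}.

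The only delicate point is the first step. We must rewrite the reference-free iteration so that the exact contraction structure appears. We must also check that the perturbation $e_m$ depends on $m$ only through a term bounded uniformly by $\varepsilon$. This is exactly where the hypothesis that the forward-error bound is a supremum over $\K$ is needed, since each iterate $\eta^m$ lies in $\K$. We do not need $\widetilde F_T$ to be Lipschitz, because the perturbation is bounded additively rather than through a contraction argument.
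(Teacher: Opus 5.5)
Your proof is correct and follows the paper's argument. You compare one inexact step with the exact-data update, which fixes $\eta^\dagger$, and obtain the scalar recurrence with additive term $\sigma^{-1}\norm{\mathcal Q_T}(\delta+\varepsilon)$, which you then sum. Your explicit identity $\sqrt{20}\rho_2/\sigma=(1-\kappa_T)/c_T$ supplies the step the paper leaves implicit when it converts the geometric sum into the stated factor $(1-\kappa_T^m)/(c_TT^{5/2})$.
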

\begin{proof}
Compare one inexact step with the exact update for $F_T(\eta^\dagger)$. Its distance from $\eta^\dagger$ is at most
$\kappa_T|\eta^m-\eta^\dagger|+\sigma^{-1}\norm{\mathcal Q_T}(\delta+\varepsilon)$.
Summing this scalar recurrence proves the assertion. A uniform function error alone does not control the derivative of $\widetilde F_T$ and therefore does not imply that its update is a contraction.
\end{proof}

\begin{theorem}[Optimal deterministic time scale]\label{thm:minimax}
Fix $T_0$ with $\kappa_{T_0}<1$, and suppose $\K$ contains a segment
$\{\eta_0+se:-r\le s\le r\}$ with $|e|=1$. For every estimator $\mathcal E:Y_T\to\R^d$ and $0<T\le T_0$,
\begin{equation}\label{eq:minimax}
 \sup_{\substack{\eta\in\K\\\norm\xi_{Y_T}\le\delta}}
 |\mathcal E(F_T(\eta)+\xi)-\eta|
 \ge\min\left\{r,\frac{\delta}{C_{T_0}T^{5/2}}\right\}.
\end{equation}
Together with \eqref{eq:inverseerror}, this gives the optimal order $\delta T^{-5/2}$ up to constants and saturation at the parameter scale.
\end{theorem}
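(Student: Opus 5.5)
We use a deterministic two-point (Le Cam type) argument along the given segment. The upper Lipschitz bound in \eqref{eq:bilipschitz} does the main work, but it must be applied at time $T$, not $T_0$. Note first that $C_T$ in \eqref{eq:cC} is nondecreasing in $T$. Hence for $0<T\le T_0$ and all $\eta,\zeta\in\K$,
\[
\norm{F_T(\eta)-F_T(\zeta)}_{Y_T}\le C_{T_0}T^{5/2}|\eta-\zeta|.
\]
We do not need $\kappa_T<1$ for this upper bound. Its derivation from \eqref{eq:expansion} by integrating in $L^2$ and applying Minkowski's inequality is valid for every $T$. The hypothesis $\kappa_{T_0}<1$ serves only to make the comparison with \eqref{eq:inverseerror} meaningful. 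Then $\kappa_T\le\kappa_{T_0}<1$ for all $T\le T_0$, because $L_R$ is increasing.

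Set $s=\min\{r,\delta/(C_{T_0}T^{5/2})\}$ and take the two points $\eta_\pm=\eta_0\pm\frac s2 e$ of the segment. They lie in $\K$, since $s/2\le r$, and satisfy $|\eta_+-\eta_-|=s$. The key step is to find a single data vector $y$ that both hypotheses produce with admissible noise. Take the midpoint $y=\tfrac12(F_T(\eta_+)+F_T(\eta_-))$ and put $\xi_\pm=y-F_T(\eta_\pm)=\mp\tfrac12(F_T(\eta_+)-F_T(\eta_-))$. By the displayed bound,
\[
\norm{\xi_\pm}_{Y_T}\le\tfrac12C_{T_0}T^{5/2}s\le\tfrac12\delta\le\delta.
\]
Both pairs $(\eta_\pm,\xi_\pm)$ therefore enter the supremum in \eqref{eq:minimax}, and both yield the same observed data $y$.

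For any estimator $\mathcal E$, the triangle inequality gives
\[
s=|\eta_+-\eta_-|\le|\mathcal E(y)-\eta_+|+|\mathcal E(y)-\eta_-|.
\]
So one of the two errors is at least $s/2$. This already gives the bound with an extra factor $\tfrac12$. To remove it, we use the slack in the noise budget, since only noise of size $\delta/2$ was used. Instead take $\eta_\pm=\eta_0\pm\frac{s}{2}e$ with $s=\min\{2r,2\delta/(C_{T_0}T^{5/2})\}$. The noise bound is then exactly $\le\delta$. The same computation gives an error of at least $s/2=\min\{r,\delta/(C_{T_0}T^{5/2})\}$, which is \eqref{eq:minimax}.

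There is one obstacle in this step. If $s=2r$, the points $\eta_0\pm re$ are exactly the segment endpoints and still lie in $\K$, so the rescaling works. No further control of $F_T$ is needed. Finally, combining this with the upper estimate $\delta/(c_TT^{5/2})$ from \eqref{eq:inverseerror} identifies the order $\delta T^{-5/2}$. Here $c_T\ge c_{T_0}>0$ for $T\le T_0$, because $L_R$ is increasing. The two estimates agree up to the ratio $C_{T_0}/c_{T_0}$, with saturation at $r$.
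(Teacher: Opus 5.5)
Your proposal is correct and is essentially the paper's proof: after your rescaling you use the same two parameters $\eta_0\pm\min\{r,\delta/(C_{T_0}T^{5/2})\}e$, the same common midpoint datum (admissible because $C_T\le C_{T_0}$), and the same triangle-inequality conclusion. The detour through the factor $\tfrac12$ could be skipped by choosing these points directly. Your remark that the upper Lipschitz bound does not need $\kappa_T<1$ is accurate but immaterial, since $\kappa_T\le\kappa_{T_0}<1$ for $T\le T_0$ anyway.
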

\begin{proof}
Put $s=\min\{r,\delta/(C_{T_0}T^{5/2})\}$ and $\eta_\pm=\eta_0\pm se$. Since $C_T\le C_{T_0}$, the upper bound in \eqref{eq:bilipschitz} gives
$\norm{F_T(\eta_+)-F_T(\eta_-)}\le2\delta$. Their data midpoint is therefore an admissible noisy datum for both parameters. By the triangle inequality, the estimator at this same datum is at distance at least $s$ from one of them. Finally $c_T\ge c_{T_0}>0$, so the upper reconstruction constant is uniform for $0<T\le T_0$ after extracting $T^{-5/2}$.
\end{proof}
The noise convention matters. The theorem uses absolute $L^2$ time-history error. A single terminal observation has a leading signal of order $T^2$ and a different noise scaling. Increasing the experiment dimension or shrinking the sensor support can also worsen $\sigma$; neither operation has dimension-independent constants here.

\section{Least squares, convexity and adjoints}\label{sec:adjoint}
Consider the normalized objective
\begin{equation}\label{eq:Jnorm}
 J_\alpha(\eta)=\frac1{2T^5}\norm{F_T(\eta)-y^\delta}_{Y_T}^2
 +\frac\alpha2|\eta-\eta_{\rm pr}|^2,\qquad\eta\in\K,\quad\alpha\ge0.
\end{equation}
Continuity and compactness imply the existence of a global minimizer, irrespective of convexity. The prior $\eta_{\rm pr}\in\R^d$ is fixed.

\begin{theorem}[A separate global convexity criterion]\label{thm:convex}
Assume the hypotheses of Theorem~\ref{thm:inverse} and
$\norm{y^\delta-F_T(\eta^\dagger)}\le\delta$. Put $D_\K=\diam(\K)$, $\widehat\delta=\delta/T^{5/2}$ and
\begin{equation}\label{eq:mu}
 \mu=c_T^2-BT(C_TD_\K+\widehat\delta)+\alpha,\qquad
 L_J=C_T^2+BT(C_TD_\K+\widehat\delta)+\alpha.
\end{equation}
If $\mu>0$, then $J_\alpha$ is strongly convex on $\K$ and has a unique global minimizer. Exact projected-gradient iteration
\begin{equation}\label{eq:pg}
 \eta^{m+1}=\Pi_\K[\eta^m-s\nabla J_\alpha(\eta^m)]
\end{equation}
converges globally with contraction factor
$q=\max\{|1-s\mu|,|1-sL_J|\}<1$ whenever $0<s<2/L_J$. In particular, $s=1/L_J$ gives $q=1-\mu/L_J$.
\end{theorem}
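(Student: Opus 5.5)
The plan is to bound the Hessian of $J_\alpha$ from above and below uniformly on $\K$, and then invoke the standard contraction estimate for projected gradient on a strongly convex, smooth function over a closed convex set. By Lemma~\ref{lem:sensitivity}, $F_T$ is $C^2$ on an open neighborhood of $\K$, so $J_\alpha$ is $C^2$ there. For $h\in\R^d$ the second derivative is
\[
 D^2J_\alpha(\eta)[h,h]=\frac1{T^5}\norm{DF_T(\eta)h}_{Y_T}^2
 +\frac1{T^5}\big(F_T(\eta)-y^\delta,\,D^2F_T(\eta)[h,h]\big)_{Y_T}+\alpha|h|^2 .
\]

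The first term will be handled by the bi-Lipschitz estimate \eqref{eq:bilipschitz}. Dividing $F_T(\eta+\epsilon h)-F_T(\eta)$ by $\epsilon$ and letting $\epsilon\to0$ gives
\[
 c_TT^{5/2}|h|\le\norm{DF_T(\eta)h}\le C_TT^{5/2}|h|.
\]
This needs $\eta+\epsilon h\in\K$. Since $\K$ is convex with nonempty interior, I would first prove the bound at interior points. Every $h$ is then admissible for small $\epsilon$, and continuity of $DF_T$ extends the bound to all of $\K$. As a result, the first term lies between $c_T^2|h|^2$ and $C_T^2|h|^2$.

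For the residual term, I will use the triangle inequality together with the upper bound in \eqref{eq:bilipschitz}:
\[
 \norm{F_T(\eta)-y^\delta}\le\norm{F_T(\eta)-F_T(\eta^\dagger)}+\delta\le C_TT^{5/2}D_\K+\delta .
\]
Equation \eqref{eq:B} gives $\norm{D^2F_T(\eta)[h,h]}\le BT^{7/2}|h|^2$. Cauchy--Schwarz then bounds the middle term in absolute value by
\[
 T^{-5}\,BT^{7/2}\,(C_TT^{5/2}D_\K+\delta)\,|h|^2=BT(C_TD_\K+\widehat\delta)|h|^2 .
\]
Combining the three terms gives $\mu|h|^2\le D^2J_\alpha(\eta)[h,h]\le L_J|h|^2$ on $\K$, with $\mu$ and $L_J$ as in \eqref{eq:mu}.

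Along any segment in the convex set $\K$, Taylor's formula turns the lower Hessian bound into $\mu$-strong convexity. A $\mu$-strongly convex function on a compact convex set has exactly one minimizer $\eta^*$, and $\eta^*$ is a fixed point of \eqref{eq:pg}, because the variational inequality is equivalent to $\eta^*=\Pi_\K[\eta^*-s\nabla J_\alpha(\eta^*)]$. For the contraction, the mean value theorem gives
\[
 \nabla J_\alpha(\eta)-\nabla J_\alpha(\zeta)=H(\eta-\zeta),\qquad H=\int_0^1\nabla^2J_\alpha(\zeta+\tau(\eta-\zeta))\,\dd\tau .
\]
The matrix $H$ is symmetric with spectrum in $[\mu,L_J]$. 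Hence the map $\eta\mapsto\eta-s\nabla J_\alpha(\eta)$ is Lipschitz with constant $\norm{I-sH}\le\max\{|1-s\mu|,|1-sL_J|\}=q$. Composing with the nonexpansive projection $\Pi_\K$ yields $|\eta^{m+1}-\eta^*|\le q|\eta^m-\eta^*|$. When $0<s<2/L_J$ we have $q<1$, and with $s=1/L_J$ we get $q=1-\mu/L_J$.

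The main obstacle I expect is the passage from the difference quotient to the lower derivative bound at boundary points, which relies on interior density and continuity of $DF_T$. A second point needing care is that $\mu\le L_J$ automatically, since $c_T\le C_T$. That inequality is what makes $q<1$ exactly in the stated step range.
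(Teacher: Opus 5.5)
Your proposal is correct and follows the paper's proof step for step. The shared steps are: derivative bounds from differentiating the bi-Lipschitz estimate at interior points and extending them to all of $\K$ by continuity; the residual bound $\norm{F_T(\eta)-y^\delta}\le T^{5/2}(C_TD_\K+\widehat\delta)$ combined with \eqref{eq:B} to get $\mu I\preceq D^2J_\alpha\preceq L_JI$; and then the averaged-Hessian estimate $\norm{I-s\overline H}\le q$, nonexpansiveness of $\Pi_\K$, and the variational-inequality characterization of the minimizer as a fixed point. Your observation that $\mu\le L_J$ (via $c_T\le C_T$), which is needed for $q<1$ on the whole range $0<s<2/L_J$, is a small detail the paper leaves implicit.
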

\begin{proof}
Differentiating \eqref{eq:bilipschitz} at interior parameters yields
$c_TT^{5/2}|h|\le\norm{DF_T(\eta)h}\le C_TT^{5/2}|h|$.
The derivatives extend continuously to the boundary; nonempty interior and convexity allow passage to every boundary point. Also
$\norm{F_T(\eta)-y^\delta}\le T^{5/2}(C_TD_\K+\widehat\delta)$.
The Hessian is
\[
 D^2J_\alpha(\eta)[h,h]=T^{-5}\{\norm{DF_T(\eta)h}^2+
 \langle F_T(\eta)-y^\delta,D^2F_T(\eta)[h,h]\rangle\}+\alpha|h|^2.
\]
Lemma~\ref{lem:sensitivity} gives the quadratic-form bounds
$\mu I\le D^2J_\alpha\le L_JI$. Integration along parameter segments proves strong convexity and uniqueness. For any two parameters, their gradient difference is $\overline H(\eta-\zeta)$, where $\overline H$ is the average Hessian on their segment and has spectrum in $[\mu,L_J]$. Thus $I-s\overline H$ has norm at most $q$. Nonexpansiveness of projection proves contraction of \eqref{eq:pg}. The first-order variational inequality characterizes the constrained minimizer as the fixed point of this update.
\end{proof}
For bounded scaled noise $\widehat\delta$, the condition holds at sufficiently small $T$, since $c_T\to\sigma/(\sqrt{20}\rho_2)>0$ and the negative term is $O(T)$. Fixed absolute noise does not have bounded $\widehat\delta$ as $T\downarrow0$. The criterion is sufficient; its failure leaves convexity undecided.

\begin{proposition}[Regularization and error certificate]\label{prop:tikhonov}
Assume $\kappa_T<1$, $\eta^\dagger\in\K$ and $\lambda,\xi\ge0$, with data satisfying
\[\norm{y^\delta-F_T(\eta^\dagger)}_{Y_T}\le\delta.\]
Let $\widetilde F_T$ be continuous with uniform forward error at most $\varepsilon$, and let
$\widetilde J_\lambda=\tfrac12\norm{\widetilde F_T-y^\delta}^2+\tfrac\lambda2|\eta-\eta_{\rm pr}|^2$.
Every $\widehat\eta\in\K$ satisfying
$\widetilde J_\lambda(\widehat\eta)\le\inf_\K\widetilde J_\lambda+\xi$ obeys
\begin{equation}\label{eq:tikherr}
 |\widehat\eta-\eta^\dagger|\le
 \frac{\sqrt{(\delta+\varepsilon)^2+\lambda|\eta^\dagger-\eta_{\rm pr}|^2+2\xi}
       +\delta+\varepsilon}{c_TT^{5/2}}.
\end{equation}
For any candidate, without an optimization guarantee,
\begin{equation}\label{eq:rescert}
 |\eta-\eta^\dagger|\le
 \frac{\norm{\widetilde F_T(\eta)-y^\delta}+\delta+\varepsilon}{c_TT^{5/2}}.
\end{equation}
\end{proposition}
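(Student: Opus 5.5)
The whole argument rests on the lower bound in \eqref{eq:bilipschitz} from Theorem~\ref{thm:inverse}: for all $\eta,\zeta\in\K$, $\norm{F_T(\eta)-F_T(\zeta)}_{Y_T}\ge c_TT^{5/2}|\eta-\zeta|$. Both estimates then reduce to bounding $\norm{F_T(\eta)-F_T(\eta^\dagger)}_{Y_T}$ by quantities that are observable or controlled. The triangle inequality gives
\[
 \norm{F_T(\eta)-F_T(\eta^\dagger)}\le\norm{F_T(\eta)-\widetilde F_T(\eta)}+\norm{\widetilde F_T(\eta)-y^\delta}+\norm{y^\delta-F_T(\eta^\dagger)}\le\norm{\widetilde F_T(\eta)-y^\delta}+\varepsilon+\delta .
\]
Dividing by $c_TT^{5/2}$ gives \eqref{eq:rescert} immediately. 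This uses only the uniform forward error and the noise bound; no optimality of $\eta$ is needed.

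For \eqref{eq:tikherr}, I would bound the residual of a near-minimizer $\widehat\eta$ by comparing with the admissible competitor $\eta^\dagger\in\K$. Since $\inf_\K\widetilde J_\lambda\le\widetilde J_\lambda(\eta^\dagger)$,
\[
 \tfrac12\norm{\widetilde F_T(\widehat\eta)-y^\delta}^2\le\widetilde J_\lambda(\widehat\eta)\le\widetilde J_\lambda(\eta^\dagger)+\xi .
\]
The regularization term at $\widehat\eta$ is nonnegative, so discarding it is legitimate. Next I would bound the competitor's value:
\[
 \norm{\widetilde F_T(\eta^\dagger)-y^\delta}\le\norm{\widetilde F_T(\eta^\dagger)-F_T(\eta^\dagger)}+\norm{F_T(\eta^\dagger)-y^\delta}\le\varepsilon+\delta .
\]
Hence $\widetilde J_\lambda(\eta^\dagger)\le\tfrac12(\delta+\varepsilon)^2+\tfrac\lambda2|\eta^\dagger-\eta_{\rm pr}|^2$. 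Combining,
\[
 \norm{\widetilde F_T(\widehat\eta)-y^\delta}\le\sqrt{(\delta+\varepsilon)^2+\lambda|\eta^\dagger-\eta_{\rm pr}|^2+2\xi}.
\]
Inserting this into \eqref{eq:rescert} with $\eta=\widehat\eta$ yields \eqref{eq:tikherr}.

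There is no substantive obstacle. The only points to check are the following.
\begin{itemize}
\item The lower stability bound is applied to the exact map $F_T$, not to $\widetilde F_T$. This is why $\varepsilon$ enters additively and no derivative control of $\widetilde F_T$ is required.
\item The comparison uses $\eta^\dagger\in\K$, so it is an admissible competitor.
\item $\widehat\eta$ exists as a near-minimizer because $\widetilde F_T$ is continuous and $\K$ is compact. Only the inequality defining $\widehat\eta$ is used, though, so existence is not even needed.
\end{itemize}
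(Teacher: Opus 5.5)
Your proposal is correct and follows the paper's proof: you compare the near-minimizer with the admissible competitor $\eta^\dagger$ to bound the computed residual, use the triangle inequality to pass to $\norm{F_T(\widehat\eta)-F_T(\eta^\dagger)}_{Y_T}$, and then apply the lower bound in \eqref{eq:bilipschitz}. One small point to state explicitly: the candidate $\eta$ in \eqref{eq:rescert} must lie in $\K$, since the lower bound in \eqref{eq:bilipschitz} is only available there.
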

\begin{proof}
Comparison with $\eta^\dagger$ bounds the squared computed residual by
$(\delta+\varepsilon)^2+\lambda|\eta^\dagger-\eta_{\rm pr}|^2+2\xi$.
The triangle inequality bounds the exact data difference by that residual plus $\delta+\varepsilon$. The lower bound in \eqref{eq:bilipschitz} proves both statements. For \eqref{eq:Jnorm}, the unnormalized penalty is $\lambda=\alpha T^5$.
\end{proof}

\subsection{Continuous adjoint and the coefficient gradient}
We give the adjoint for the unnormalized objective with penalty $\lambda$. For $J_\alpha$ in \eqref{eq:Jnorm}, multiply the adjoint data source by $T^{-5}$ and use $\alpha$ in the penalty part of the gradient. Write $r_\ell(t)=CU_\eta^\ell(t)-y_\ell^\delta(t)$. For each experiment solve
\begin{equation}\label{eq:adjoint}
 \begin{split}
 \rho_1p_{\ell,tt}-k(p_{\ell,x}+q_\ell)_x&=0,\\
 \rho_2q_{\ell,tt}-bq_{\ell,xx}+k(p_{\ell,x}+q_\ell)-a_\eta q_{\ell,t}&=\chi r_\ell,
 \end{split}
\end{equation}
with $p_\ell=q_\ell=0$ at the spatial endpoints and
$p_\ell(T)=q_\ell(T)=p_{\ell,t}(T)=q_{\ell,t}(T)=0$.
Time reversal changes the negative damping sign to positive damping. Proposition~\ref{prop:forward}, with forcing, therefore gives a unique energy adjoint for $r_\ell\in L^2(0,T)$.

\begin{proposition}[Adjoint identity]\label{prop:adjoint}
The Euclidean parameter gradient is
\begin{equation}\label{eq:gradient}
 (\nabla J_\lambda)_j=\lambda(\eta_j-\eta_{{\rm pr},j})
 -\sum_{\ell=1}^d\int_0^T\int_I\psi_j\theta_{\eta,t}^\ell q_\ell\,\dd x\,\dd t.
\end{equation}
\end{proposition}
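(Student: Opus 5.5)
The plan is to combine the first sensitivity \eqref{eq:sens1} of Lemma~\ref{lem:sensitivity} with a duality (Lagrange) identity against the adjoint \eqref{eq:adjoint}. Here $J_\lambda=\tfrac12\norm{F_T-y^\delta}_{Y_T}^2+\tfrac\lambda2|\eta-\eta_{\rm pr}|^2$. Because $F_T$ is Fr\'echet differentiable into $Y_T$, the chain rule gives
\[
 DJ_\lambda(\eta)h=\lambda(\eta-\eta_{\rm pr})\cdot h+\sum_\ell\int_0^T r_\ell(t)\,\ip{Z_h^\ell(t)}\phi\,\dd t ,
\]
where $Z_h^\ell$ solves \eqref{eq:sens1} with $U_\eta^\ell$ in the forcing. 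It then suffices to show, for each $\ell$ and $h=e_j$, that
\[
 \int_0^T r_\ell\ip{Z_h^\ell}\phi\,\dd t=-\int_0^T\int_I\psi_j\theta^\ell_{\eta,t}q_\ell\,\dd x\,\dd t .
\]

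First I will write the adjoint abstractly. With $P=(p_\ell,q_\ell)$, dividing the equations by $\rho_1,\rho_2$ shows that \eqref{eq:adjoint} is
\[
 P''+AP-D_\eta P'=r_\ell\,\phi ,
\]
since $\chi r_\ell/\rho_2$ is the second component of $r_\ell\phi$. The terminal data vanish. Setting $\tilde P(t)=P(T-t)$ turns this into a forward problem with damping $+D_\eta$, zero initial data and forcing in $L^2(0,T;H)$. Proposition~\ref{prop:forward} then gives $(P,P')\in C([0,T];X)$.

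Next I will compute, formally first,
\[
 \int_0^T\ip{Z''+AZ+D_\eta Z'}{P}\,\dd t .
\]
Integrating by parts twice in time moves the time derivatives onto $P$. The boundary terms vanish because $Z(0)=Z'(0)=0$ and $P(T)=P'(T)=0$. Self-adjointness of $A$ and of $D_\eta$ turns $\ip{D_\eta Z'}P$ into $-\ip Z{D_\eta P'}$. This yields
\[
 \int_0^T\ip Z{P''+AP-D_\eta P'}\,\dd t=\int_0^T r_\ell\ip Z\phi\,\dd t .
\]
The left side also equals
\[
 -\int_0^T\ip{D_hU_\eta'}P\,\dd t=-\int_0^T\int_I\Big(\sum_i h_i\psi_i\Big)\theta_{\eta,t}\,q_\ell\,\dd x\,\dd t ,
\]
because the weight $\rho_2$ in $\ip\cdot\cdot$ cancels the $1/\rho_2$ in $D_h$. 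Taking $h=e_j$ and summing over $\ell$ gives \eqref{eq:gradient}.

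The main obstacle is justifying this integration by parts, because $Z$ and $P$ are only energy solutions and $A$ cannot be applied to them. I would handle it by regularization. Approximate $r_\ell$ in $L^2(0,T)$ by smooth functions vanishing near $t=T$. Then the adjoint forcing $r_\ell\phi$ is smooth in time with values in $D(A)$, since $\phi\in D(A)$. Likewise approximate $U_\eta'$ so that the forcing of \eqref{eq:sens1} is $C^1$ in time with values in $H$. Standard semigroup theory for bounded perturbations of the wave generator then gives classical solutions $Z,P\in C^2([0,T];H)\cap C([0,T];D(A))$, and the computation is valid. The forced estimate \eqref{eq:forced} makes both sides depend continuously on the data in $L^1(0,T;H)$, which lets me pass to the limit. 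An alternative avoids regularization: use the pairing $\langle Z,P\rangle$ in the form $\mathfrak a$ and $H$, which is the generator-level duality $\frac{\dd}{\dd t}\big[\ip{Z'}P-\ip Z{P'}\big]$, valid for mild solutions via density of $D(A)\times V$.
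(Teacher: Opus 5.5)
Your proposal is correct and takes essentially the same route as the paper. Both pair the sensitivity equation with the adjoint and integrate by parts in time, so that the boundary terms vanish by the zero initial sensitivity and zero terminal adjoint data. Both then use symmetry of the elastic and damping terms, and justify the formal computation by smoothing the data and passing to the limit with the forced estimate \eqref{eq:forced}. Your weighted $H$-pairing of the abstract equations is the same as the paper's unweighted pairing of the physical-form equations. Smoothing $U_\eta'$ is unnecessary, since Proposition~\ref{prop:forward} already gives $U_\eta'\in C^1([0,T];H)$ for the designed initial data.
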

\begin{proof}
Pair the physical-form sensitivity equations with $(p_\ell,q_\ell)$ and integrate. For sensitivity $(z,\vartheta)$, the temporal boundary expression is
\[
 \left[\int_I\{\rho_1z_tp-\rho_1zp_t+
 \rho_2\vartheta_tq-\rho_2\vartheta q_t+a_\eta\vartheta q\}\,\dd x\right]_0^T.
\]
It vanishes by the zero initial sensitivity and zero terminal adjoint data. The elastic form is symmetric, and both pairs have zero Dirichlet traces, so the spatial integrations produce no boundary term. The adjoint equations then give
\[
 \sum_\ell\int_0^Tr_\ell CZ_h^\ell\,\dd t
 =-\sum_\ell\int_0^T\int_I\Big(\sum_jh_j\psi_j\Big)\theta_{\eta,t}^\ell q_\ell\,\dd x\,\dd t.
\]
Adding the derivative of the penalty proves \eqref{eq:gradient}. One may first use smooth data and then pass to energy solutions: the products converge by the $C(H)$ bounds and the $L^2$ adjoint forcing estimate. This justifies the integrations for the stated observations.
\end{proof}
Formula \eqref{eq:gradient} is the gradient for the Euclidean parameter inner product. A coefficient-space gradient would require the corresponding Riesz representation.

\section{Discrete energy, exact adjoints and forward error}\label{sec:discrete}
Let $V_h$ be the pairs of continuous piecewise affine functions with zero boundary values on a quasiuniform partition. Define the consistent mass matrix $M$, the elastic stiffness matrix $K$, and the damping matrix $C_\eta$ by the preceding forms. Here $K$ is a matrix, whereas $\K$ is the admissible parameter set. The midpoint method is
\begin{equation}\label{eq:midpoint}
 \frac{u_{n+1}-u_n}{\tau}=\overline v_n,\qquad
 M\frac{v_{n+1}-v_n}{\tau}+K\overline u_n+C_\eta\overline v_n=0.
\end{equation}
Here $\overline u_n=(u_{n+1}+u_n)/2$ and $\overline v_n=(v_{n+1}+v_n)/2$.
Use $u_0=0$ and the mass $L^2$ projection of $v_0$ for the initial velocity. Eliminating $u_{n+1}$ gives a matrix $M+\tau C_\eta/2+\tau^2K/4$, which is positive definite. Thus every step is uniquely solvable for all $\tau>0$.

\begin{proposition}[Exact discrete energy balance]\label{prop:disenergy}
For $E_n=\tfrac12(v_n^\tr Mv_n+u_n^\tr Ku_n)$,
\begin{equation}\label{eq:disenergy}
 E_{n+1}-E_n=-\tau\overline v_n^\tr C_\eta\overline v_n\le0.
\end{equation}
\end{proposition}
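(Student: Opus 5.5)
The plan is to mimic the continuous energy argument at the discrete level. We test the second equation of \eqref{eq:midpoint} with the midpoint velocity $\overline v_n$, and we use the first equation to turn the stiffness term into a telescoping difference. Symmetry of $M$ and $K$ will be the only structural property needed, besides nonnegativity of $C_\eta$. No step size restriction should enter.

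First, multiply the momentum equation on the left by $\tau\overline v_n^\tr$. For the mass term, symmetry of $M$ gives the difference-of-squares identity
\[
 \overline v_n^\tr M(v_{n+1}-v_n)=\tfrac12\bigl(v_{n+1}^\tr Mv_{n+1}-v_n^\tr Mv_n\bigr),
\]
because the cross terms $v_{n+1}^\tr Mv_n$ and $v_n^\tr Mv_{n+1}$ cancel.

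Second, for the stiffness term, substitute $\tau\overline v_n=u_{n+1}-u_n$ from the first equation of \eqref{eq:midpoint}. This gives $\tau\overline v_n^\tr K\overline u_n=(u_{n+1}-u_n)^\tr K(u_{n+1}+u_n)/2$. Symmetry of $K$ reduces this to $\tfrac12(u_{n+1}^\tr Ku_{n+1}-u_n^\tr Ku_n)$.

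Third, summing the two identities yields $E_{n+1}-E_n+\tau\overline v_n^\tr C_\eta\overline v_n=0$, which is the equality in \eqref{eq:disenergy}.

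For the sign, note that $C_\eta$ is the Galerkin matrix of the form $\int_I a_\eta v_2 w_2\,\dd x$. The quadratic form $\overline v_n^\tr C_\eta\overline v_n$ is therefore the integral of $a_\eta$ times the square of the rotational component of the finite-element function with coefficients $\overline v_n$. This is nonnegative because $a_\eta\ge0$ a.e. by \eqref{eq:coeff}, and the inequality follows.

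I expect no genuine obstacle. The only point needing care is the bookkeeping: one must check that $M$, $K$ and $C_\eta$ are assembled from symmetric bilinear forms, which holds since the forms $\ip\cdot\cdot$ and $\mathfrak a$ are symmetric. One must also check that the displacement and velocity are evaluated at the same midpoint in both equations, which is exactly the feature of the implicit midpoint rule that makes the cancellation exact rather than approximate.
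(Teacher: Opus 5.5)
Your proposal is correct and follows the paper's proof: test the momentum equation with $\tau\overline v_n^\tr$, substitute $\tau\overline v_n=u_{n+1}-u_n$ in the stiffness term, and apply the symmetric difference-of-squares identity to $M$ and $K$. Your explicit justification of the sign, through $a_\eta\ge0$ in the Galerkin damping form $\int_I a_\eta v_2w_2\,\dd x$, spells out a point the paper leaves implicit.
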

\begin{proof}
Multiply the second equation in \eqref{eq:midpoint} by $\tau\overline v_n^\tr$, substitute the first equation in the elastic term, and use
$(a+b)^\tr B(a-b)/2=(a^\tr Ba-b^\tr Bb)/2$ for symmetric $B$.
\end{proof}

\subsection{The adjoint of the implemented scheme}
With $x_n=(u_n,v_n)$, write $P_\eta x_{n+1}=Q_\eta x_n$, where
\begin{equation}\label{eq:PQ}
 P_\eta=\begin{pmatrix}I&-\tau I/2\\ \tau K/2&M+\tau C_\eta/2\end{pmatrix},\qquad
 Q_\eta=\begin{pmatrix}I&\tau I/2\\ -\tau K/2&M-\tau C_\eta/2\end{pmatrix}.
\end{equation}
Here $y_n^\ell$ are supplied sampled observations. For a general $L^2$ datum, an averaging or interpolation rule must first specify discrete data; point evaluation of an $L^2$ equivalence class is not defined. Let $H_hx_n$ be the scalar rotation observation, and let $\omega_n$ be the actual quadrature weights, including any normalization. The discrete objective is
\[
 J_h=\tfrac12\sum_{\ell,n}\omega_n|H_hx_n^\ell-y_n^\ell|^2
       +\tfrac\lambda2|\eta-\eta_{\rm pr}|^2.
\]
Set $g_n^\ell=\omega_nH_h^\tr(H_hx_n^\ell-y_n^\ell)$.
Start with $p_N^\ell=g_N^\ell$, and for $n=N-1,\ldots,0$ compute
\begin{equation}\label{eq:disadjoint}
 P_\eta^\tr z_{n+1}^\ell=p_{n+1}^\ell,\qquad
 p_n^\ell=g_n^\ell+Q_\eta^\tr z_{n+1}^\ell.
\end{equation}
Then the exact derivative of this discrete objective is
\begin{equation}\label{eq:disgrad}
 (\nabla J_h)_j=\lambda(\eta_j-\eta_{{\rm pr},j})
       -\tau\sum_{\ell,n}(z_{n+1,v}^\ell)^\tr C_j\overline v_n^\ell,
 \qquad C_j=\partial_{\eta_j}C_\eta.
\end{equation}
To verify the formula, differentiate the recurrence:
$P_\eta\delta x_{n+1}=Q_\eta\delta x_n+
(\partial_jQ_\eta x_n-\partial_jP_\eta x_{n+1})h_j$.
Pair with $z_{n+1}$ and sum. Equations \eqref{eq:disadjoint} telescope all state variations; $\delta x_0=0$. The remaining matrix derivative has zero displacement component and velocity component $-\tau C_j\overline v_nh_j$, proving \eqref{eq:disgrad}. Coordinate transposes are used because $M$ is already in \eqref{eq:PQ}. An extra mass inverse would change the derivative.

\subsection{Finite-time convergence and inverse error}
The physical coefficients and observation horizon are fixed in the following estimate. We identify finite-element coordinate vectors with their associated functions.

\begin{theorem}[Forward approximation]\label{thm:FE}
Assume, uniformly for $\eta\in\K$ and the finitely many experiments,
\begin{equation}\label{eq:FEreg}
 U,U'\in C([0,T];D(A)),\qquad
 U\in C^3([0,T];V)\cap C^4([0,T];H).
\end{equation}
For exact form assembly, initial data $u_0=R_hU(0)$, $v_0=P_hU'(0)$ with the projections defined below, and quasiuniform meshes,
\begin{equation}\label{eq:FEerror}
 \max_{0\le n\le N}\left(\norm{U(t_n)-u_n}_V^2+
 \norm{U'(t_n)-v_n}_H^2\right)^{1/2}\le C_{\mathrm{FE},T}(h+\tau^2).
\end{equation}
The constant depends on the uniform norms in \eqref{eq:FEreg} but not on $h$ or $\tau$. No coupling condition between $h$ and $\tau$ is required. Piecewise linear temporal interpolation of the nodal observations defines a map $\widetilde F_T$ with
\begin{equation}\label{eq:FEobs}
 \sup_{\eta\in\K}\norm{\widetilde F_T(\eta)-F_T(\eta)}_{Y_T}
 \le\varepsilon_{h,\tau}:=C_{\mathrm{obs},T}(h+\tau^2).
\end{equation}
\end{theorem}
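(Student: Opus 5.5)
We split the error with the elastic (Ritz) projection $R_h:V\to V_h$, defined by $\mathfrak a(R_hU-U,Z_h)=0$ for all $Z_h\in V_h$, and the mass projection $P_h$. Write
\[
U(t_n)-u_n=(U(t_n)-R_hU(t_n))+(R_hU(t_n)-u_n)=:\rho_n+e_n,
\]
and similarly $U'(t_n)-v_n=(U'(t_n)-R_hU'(t_n))+(R_hU'(t_n)-v_n)=:\rho'_n+f_n$. The Ritz error obeys $\norm{\rho}_V\le Ch\norm{U}_{D(A)}$ and $\norm{\rho'}_H\le Ch\norm{U'}_{V}$ for piecewise linears, since $D(A)=(H^2\cap H^1_0)^2$ and the form $\mathfrak a$ is equivalent to the $H^1_0$ norm. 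This gives the $O(h)$ part of \eqref{eq:FEerror} from the first assumption in \eqref{eq:FEreg}. Quasiuniformity is not strictly needed for these estimates, but we retain it as stated.

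Next, derive the error equations for $(e_n,f_n)$ by inserting $R_hU(t_n)$ and $R_hU'(t_n)$ into the midpoint scheme \eqref{eq:midpoint}. This produces consistency residuals of two kinds.
\begin{itemize}
\item[(i)] Temporal truncation. For the midpoint/trapezoidal rule this is $O(\tau^2)$, controlled by $\norm{U'''}_V$ in the first equation and by $\norm{U''''}_H$ in the second. This is where $U\in C^3(V)\cap C^4(H)$ enters.
\item[(ii)] Spatial terms. The Ritz property removes the elastic term $K$, because $\mathfrak a(R_hU,Z_h)=\mathfrak a(U,Z_h)$, so the mismatch between $R_h$ and $P_h$ remains only in the mass and damping terms. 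These involve $\rho'$ and its time difference, of size $O(h)$ in $H$ via $U''\in C(V)$. Damping is a bounded $L^2$ multiplier, so no derivative of $a_\eta$ is needed.
\end{itemize}

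We then test the error equation with $\overline f_n$ exactly as in Proposition~\ref{prop:disenergy}. The damping contribution has a favorable sign, and this yields
\[
\mathcal E_{n+1}-\mathcal E_n\le\tau\norm{r_n}\,\norm{(\overline e_n,\overline f_n)},
\]
with $\mathcal E_n=\tfrac12(\norm{e_n}_V^2+\norm{f_n}_H^2)$. A subtlety is that the first equation's residual lies in $V$, not $H$. We handle it by pairing through $K$, which costs $\norm{\cdot}_V$ of the truncation, available from $U'''\in C(V)$. Dividing by $\sqrt{\mathcal E}$ and summing (discrete Gronwall without exponential growth, since the scheme is dissipative) gives
\[
\sqrt{\mathcal E_n}\le\sqrt{\mathcal E_0}+CT(h+\tau^2).
\]
The initial terms are $e_0=0$ and $f_0=P_hU'(0)-R_hU'(0)=O(h)$ in $H$. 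No step-size restriction arises because the step matrix is positive definite for every $\tau$, so no coupling between $h$ and $\tau$ is imposed. All constants depend on the norms in \eqref{eq:FEreg}, which are uniform in $\eta\in\K$ and over the finitely many experiments.

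For \eqref{eq:FEobs}, the nodal observation error is $|\ip{U(t_n)-u_n}\phi|\le\norm\phi_H\norm{U(t_n)-u_n}_H\le C(h+\tau^2)$, using Poincar\'e to pass from $V$ to $H$. Piecewise linear interpolation of the exact $f(\cdot,\eta)$ has interpolation error $O(\tau^2\sup|f''|)$, bounded by $\tau^2\norm\phi_H R_{1\ell}$ through Proposition~\ref{prop:forward}. Combining the two and integrating over $(0,T)$ gives the $L^2$ bound with $\sqrt T$ absorbed into $C_{\mathrm{obs},T}$.

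The main obstacle is the bookkeeping in the consistency step. The Ritz and mass projections are both present, and the first-equation residual must be measured in $V$ so that the energy argument closes. If pairing through $K$ fails, we would instead test the first equation with $K\overline e_n$, rewriting the scheme in the $(e,f)$ energy norm directly. This requires $R_h$ of the velocity truncation $\tau^{-1}(U(t_{n+1})-U(t_n))-\overline{U'}$, bounded in $V$ by $\tau^2\norm{U'''}_{C(V)}$.
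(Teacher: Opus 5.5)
Your proof is correct, but it uses a different comparison function for the velocity, and this changes where the spatial error sits.

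\textbf{The paper's route.} It sets $e_v^n=P_hU'(t_n)-v_n$ with the mass projection. This removes the mass-term spatial error from the second equation, so its $s^n$ is $O(h^2)$ in $H$, and it makes the initial error exactly zero. The cost is that $(R_h-P_h)\overline v^n$ appears in the first-equation residual and must be bounded in $V$. That bound uses an inverse inequality, which is where quasiuniformity enters.

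\textbf{Your route.} You use $R_h$ for both components, the classical elliptic-projection splitting for second-order hyperbolic problems. The first-equation residual is then purely temporal, $R_h(\delta U^n-\overline v^n)$. It is at most $\tau^2\sup\norm{U'''}_V/12$, because $R_h$ is a $V$-contraction. The spatial defect moves into the second equation as $(R_h-P_h)\delta v^n+P_hD_\eta(R_h-I)\overline v^n$, measured in $H$. The energy pairing is then the same as in the paper: the first equation with $\overline e_n$ in $\mathfrak a$, the second with $\overline f_n$ in the mass product. The cross terms cancel and the division step is identical, so your fallback is unnecessary.

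\textbf{What each buys.} Your version needs no inverse inequality and no quasiuniformity. Because your spatial residual lies in $H$, the extra regularity $U\in C^2([0,T];D(A))$ would give $O(h^2+\tau^2)$ for $R_hU-u_h$ and hence for the observations. That is consistent with the faster observation convergence reported in the numerical section.

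\textbf{A step to make explicit.} Your route relies on a duality argument. Only $U''\in C(V)$ is assumed, so the estimate $\norm{(I-R_h)\delta v^n}_H\le Ch\sup\norm{U''}_V$ is the Aubin--Nitsche bound $\norm{(I-R_h)z}_H\le Ch\norm{(I-R_h)z}_V$. It follows from $A^{-1}:H\to D(A)$. Passing through Poincar\'e alone gives no power of $h$ here. You allude to this through $D(A)=(H^2\cap H^1_0)^2$, but it is the key lemma of your version and should be stated.

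\textbf{Remaining steps.} Your nonzero initial error $f_0=(R_h-P_h)U'(0)$ is $O(h^2)$ in $H$ and harmless. The observation step coincides with the paper's.
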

\begin{proof}
Let $R_h:V\to V_h$ be the elastic Ritz projection and $P_h:H\to V_h$ the mass-orthogonal projection, defined by $\mathfrak a(R_hz,z_h)=\mathfrak a(z,z_h)$ and $\ip{P_hz}{z_h}=\ip z{z_h}$ for every $z_h\in V_h$. For $z\in D(A)$, interpolation and best approximation give
\begin{equation}\label{eq:projections}
 \norm{z-R_hz}_V\le Ch\norm z_{D(A)},\quad
 \norm{z-P_hz}_H\le Ch^2\norm z_{D(A)},\quad
 \norm{(R_h-P_h)z}_V\le Ch\norm z_{D(A)}.
\end{equation}
For completeness, the last inequality follows from the first and an inverse inequality. If $I_hz$ is the nodal interpolant, then
$\norm{I_hz-P_hz}_V\le Ch^{-1}\norm{P_h(I_hz-z)}_H\le Ch\norm z_{H^2}$.
Add $\norm{z-I_hz}_V\le Ch\norm z_{H^2}$ and use the equivalence of the $H^2$ and $D(A)$ norms.

Write $v=U'$ and set $e_u^n=R_hU(t_n)-u_n$, $e_v^n=P_hv(t_n)-v_n$. For exact nodal values, use $\delta z^n=(z(t_{n+1})-z(t_n))/\tau$ and $\overline z^n=(z(t_{n+1})+z(t_n))/2$. Two integrations by parts give the Banach-valued identity
\begin{equation}\label{eq:trapezoiddefect}
 \delta z^n-\overline {z'}^n=-\frac1{2\tau}
 \int_{t_n}^{t_{n+1}}(s-t_n)(t_{n+1}-s)z'''(s)\,\dd s.
\end{equation}
Its norm is at most $\tau^2\sup\norm{z'''}/12$.
Subtracting the discrete equations from the projected endpoint-averaged exact equations gives
\begin{align}
 \delta e_u^n&=\overline e_v^n+r^n,\\
 \ip{\delta e_v^n}{z_h}+\mathfrak a(\overline e_u^n,z_h)
       +\ip{D_\eta\overline e_v^n}{z_h}&=\ip{s^n}{z_h},
\end{align}
where
\begin{align*}
 r^n&=(R_h-P_h)\overline v^n+R_h(\delta U^n-\overline v^n),\\
 s^n&=P_h(\delta v^n-\overline {v'}^n)+P_hD_\eta(P_h-I)\overline v^n.
\end{align*}
The Ritz identity removes the elastic projection error. No derivative of $a_\eta$ is taken. By \eqref{eq:projections}--\eqref{eq:trapezoiddefect},
\begin{align*}
 \norm{r^n}_V&\le Ch\sup\norm v_{D(A)}+\tfrac1{12}\tau^2\sup\norm{U'''}_V,\\
 \norm{s^n}_H&\le Cd_*h^2\sup\norm v_{D(A)}+\tfrac1{12}\tau^2\sup\norm{U''''}_H.
\end{align*}
The suprema may be taken over the current time cell. Let
$e_n=(\norm{e_u^n}_V^2+\norm{e_v^n}_H^2)^{1/2}$. Pair the two error equations with $\overline e_u^n$ in the elastic product and $\overline e_v^n$ in the mass product. Cross terms cancel, leaving
\[
 \frac{e_{n+1}^2-e_n^2}{2\tau}+\ip{D_\eta\overline e_v^n}{\overline e_v^n}
 =\mathfrak a(r^n,\overline e_u^n)+\ip{s^n}{\overline e_v^n}.
\]
The right side is bounded by
$(\norm{r^n}_V+\norm{s^n}_H)(e_{n+1}+e_n)/2$. Divide when the last sum is positive; the zero case is immediate. The projected initialization gives $e_0=0$; an initialization perturbation contributes its norm $e_0$ to the following bound. Thus
\[
 \max_ne_n\le\sum_n\tau(\norm{r^n}_V+\norm{s^n}_H)\le C_{\mathrm{FE},T}(h+\tau^2).
\]
Adding the projection errors proves \eqref{eq:FEerror}.

Linear interpolation of exact nodal displacements has $H$ error at most $\tau^2\sup\norm{U''}_H/8$. Combine this with the nodal estimate, the embedding $V\hookrightarrow H$, the bounded functional $C$, and integration over time to obtain \eqref{eq:FEobs}. Piecewise constant interpolation would in general give only first-order temporal accuracy in this history norm.
\end{proof}

The regularity assumption is stronger than that of the inverse theorem. It is satisfied in the smooth benchmark below. For the benchmark, this follows from generator-domain conditions. Let $G_\eta(U,v)=(v,-AU-D_\eta v)$ and $x_0=(0,(0,g))$. Sufficient conditions for $x_0\in D(G_\eta^3)$ are
\[
 v_0\in D(A),\qquad D_\eta v_0\in D(A),\qquad -Av_0+D_\eta^2v_0\in V.
\]
Indeed these say successively that $x_0,G_\eta x_0,G_\eta^2x_0$ belong to $D(G_\eta)$. If $g=\sin^4(\pi x/L)$ times a polynomial and $a_\eta$ is a smooth polynomial, then $g=g'=g''=0$ at both endpoints. Since
\[
 Av_0=(-kg'/\rho_1,(-bg''+kg)/\rho_2),\qquad
 D_\eta v_0=(0,a_\eta g/\rho_2),
\]
the three conditions hold. Since $x_0\in D(G_\eta^3)$, the trajectory belongs to $C^3([0,T];X)\cap C^1([0,T];D(G_\eta^2))$. Hence $U'''\in C(V)$, $U''''\in C(H)$ and $U,U'\in C(D(A))$. Uniform bounds on $G_\eta^j x_0$, $0\le j\le3$, on the compact parameter box and semigroup contraction yield \eqref{eq:FEreg}. For example $AU=-U''-D_\eta U'$ and $AU'=-U'''-D_\eta U''$ give the required graph-norm bounds.

Inserting \eqref{eq:FEobs} in Proposition~\ref{prop:inexact} or \ref{prop:tikhonov} gives an inverse error bound with separate contributions from noise and spatial/temporal approximation. This is a fixed-$T$ conclusion. The factor $T^{-5/2}$ amplifies forward error, and no simultaneous $T\downarrow0$, $h,\tau\downarrow0$ convergence is asserted without an appropriate relation between these quantities. The trapezoidal objective below is a discrete data norm; applying a continuous residual certificate to it requires interpolation and a quadrature consistency bound.

\section{Synthetic experiments}\label{sec:numerics}
All quantities in this section are nondimensional. The data are generated by the stated beam model; no laboratory data are used. The supplied Python program assembles the matrices, solves the forward and discrete adjoint equations, and writes every table and figure from executed runs.

\subsection{A three-parameter design}
On $I=(0,1)$ choose $\rho_1=k=1$, $\rho_2=1/2$, $b=3/10$, and
\begin{equation}\label{eq:example}
 \begin{gathered}
 \psi_1=1,\qquad\psi_2=2x-1,\qquad\psi_3=\tfrac12(3(2x-1)^2-1),\\
 a_\eta=1+\sum_{j=1}^3\eta_j\psi_j,\quad
 \K=[-1/4,1/4]^3,\quad \eta^\dagger=(0.18,-0.12,0.15),\\
 \chi=\sin^4(\pi x)/\sqrt{35/128},\qquad g_\ell=\chi\psi_\ell.
 \end{gathered}
\end{equation}
Each basis function has absolute value at most one, so $1/4\le a_\eta\le7/4$. The sensor is a nonnegative average with full spatial support, normalized in $L^2$. The prepared velocities include a signed odd profile. A nonzero polynomial cannot vanish on an interval, so Corollary~\ref{cor:design} gives a positive definite Gram matrix. Its computed values are
\[
 \Gamma\simeq\begin{pmatrix}
 1&0&-0.4327258885\\0&0.0448494077&0\\-0.4327258885&0&0.1953134071
 \end{pmatrix},\qquad \sigma\simeq0.00678292447.
\]
The following bounds have an exact rational certificate, provided with the code:
\begin{equation}\label{eq:certconstants}
 \sigma>0.0067,\qquad\norm\Gamma<1.196,\qquad
 A_3<6,\qquad A_4<4.1,\qquad B<1.69.
\end{equation}
Here is a derivation that makes the certificate reproducible. Put $z=2x-1$ and $\mu_j=\int_0^1\chi^2z^j$. Expanding $\sin^8(\pi x)$ and integrating twice or four times gives
\[
 \mu_2=\frac13-\frac{205}{72\pi^2},\qquad
 \mu_4=\frac15-\frac{205}{36\pi^2}+\frac{32197}{864\pi^4}.
\]
Thus $\Gamma_{13}=c=-205/(48\pi^2)$ and
$\Gamma_{33}=e=1/5-205/(24\pi^2)+32197/(384\pi^4)$.
The odd block is $\mu_2$, and the even block has determinant
$e-c^2=1/5-205/(24\pi^2)+151157/(2304\pi^4)$ and trace $1+e$. Hence
$\sigma\ge\min\{\mu_2,(e-c^2)/(1+e)\}$ and $\norm\Gamma\le1+e$.
For $I_\ell=\int g_\ell^2$, $J_\ell=\int(g_\ell')^2$,
\begin{align*}
 (I_1,I_2,I_3)&=(1,\mu_2,e),\\
 (J_1,J_2,J_3)&=\left(\frac{16\pi^2}{7},\frac{16\pi^2}{21}-\frac{38}{9},
 \frac{16\pi^2}{35}-\frac{38}{3}+\frac{3721}{24\pi^2}\right).
\end{align*}
These follow by integration by parts from
$J_\ell=\int\chi'^2\psi_\ell^2-\int\chi^2\psi_\ell\psi_\ell''$.
Also
\[
 R_{0\ell}^2=I_\ell/2,\quad R_{1\ell}^2=57I_\ell/8+3J_\ell/10,
 \quad \norm{A\phi}_H^2=8+\frac{704}{35}\pi^2+\frac{9216}{875}\pi^4.
\]
Using $K_D=\sqrt{12}$, $d_*=7/2$ and $\norm\phi_H=\sqrt2$ in \eqref{eq:remainderconstants} and \eqref{eq:B} proves \eqref{eq:certconstants} by rational interval arithmetic. The supplied script encloses $\pi$ using the alternating series in
$\pi=16\arctan(1/5)-4\arctan(1/239)$ with its signed remainder, and encloses square roots using integer square roots of scaled rational endpoints. Its acceptance inequalities use exact fractions, not floating-point eigenvalues.

In particular, at $T=1/2000$, \eqref{eq:certconstants} gives $\kappa_T<0.374<1/2$. The moment reconstruction is therefore certified for the continuum map at this duration. A separate calculation gives, at $T=10^{-6}$ and scaled noise $\delta/T^{5/2}\le1$,
\[
 (6\times10^{-6}+\alpha)I\ \preceq\ \nabla^2J_\alpha\ \preceq\ (0.288+\alpha)I.
\]
This certifies the projected-gradient theorem in that more restrictive regime. The sufficient convexity bound at $T=0.0005$ is negative and supplies no convexity conclusion there.

\subsection{Symmetry and the role of the odd excitation}
For a symmetric sensor, replacing $a(x)$ by $a(1-x)$ and transforming
$(w,\theta)$ to $(-w(1-x),\theta(1-x))$ preserves \eqref{eq:pde}. For an even initial rotational velocity it also preserves the initial data and the measured rotation average. Thus the two even excitations $g_1,g_3$ cannot distinguish $(\eta_1,\eta_2,\eta_3)$ from $(\eta_1,-\eta_2,\eta_3)$ at any time. For the odd excitation $g_2$, reflection reverses the initial velocity; linearity therefore reverses the recorded signal. The full-rank design uses the labelled odd experiment as well.

At the benchmark truth, the relative difference of the even histories under reflection is $6.56\times10^{-15}$; the relative sum of the odd histories is $3.89\times10^{-14}$. At the symmetric parameter, the smallest weighted sensitivity singular value using only even excitations is $4.94\times10^{-18}$. With all three experiments at the truth and $T=1$, the singular values are approximately $0.05656$, $0.003490$ and $0.001794$. These computations check the exact symmetry and local conditioning; the global continuum conclusion comes from Theorem~\ref{thm:inverse} in its stated short-time regime.

\subsection{Gradient and discretization checks}
For $T=1$, synthetic data use 128 elements and time step $5\times10^{-4}$, while inversion uses 48 elements and time step $2\times10^{-3}$. Fine histories are sampled at the inverse grid times. Assembly uses ten-point Gauss quadrature per element. The polynomial damping integrals are exact under this rule; the trigonometric observation and initial-load integrals are numerical quadratures. The theory assumes exact form assembly; these remaining quadrature errors are part of forward-model error. They preserve the proved order: subtract the nodal linear interpolant of each smooth load. A positive quadrature rule exact for quadratics integrates its product with a piecewise affine test function exactly. The remaining load-functional error is bounded by $Ch^2\norm{g\,}_{W^{2,\infty}}\norm{z_h}_H$, using the interpolation error and cellwise norm equivalence. Thus the initial mass-projection and sensor-functional errors are $O(h^2)$.

The objective uses trapezoidal time weights. At $\eta=(-0.08,0.06,-0.04)$, the relative discrepancy between the discrete adjoint gradient and independently propagated tangent sensitivities is $3.43\times10^{-16}$. Centered differences with step $10^{-5}$ give $1.06\times10^{-9}$. Additional coefficient and direction checks, including an even-excitation-only regularized objective, give tangent discrepancies at most $2.73\times10^{-15}$, centered-difference discrepancies at most $5.18\times10^{-9}$ and Taylor orders from $1.997$ to $2.005$. A directional Taylor test at five perturbations from $10^{-2}$ to $6.25\times10^{-4}$ shows the second-order remainder in Figure~\ref{fig:checks}. The maximum absolute defect in \eqref{eq:disenergy} is $1.30\times10^{-15}$.

\begin{table}[htbp]\centering
\caption{Observation convergence. Left: simultaneous space/time refinement against 192 elements and $\tau=0.00025$. Right: time refinement on 48 elements against the semidiscrete matrix exponential. Errors use the trapezoidal history norm.}\label{tab:refine}
\begin{tabular}{rrr@{\qquad}rr}\toprule
Elements&$\tau$&Error&$\tau$&Time error\\\midrule
24&0.004&$1.950\,10^{-4}$&0.008&$1.030\,10^{-5}$\\
48&0.002&$4.648\,10^{-5}$&0.004&$2.575\,10^{-6}$\\
96&0.001&$9.278\,10^{-6}$&0.002&$6.438\,10^{-7}$\\
128&0.0005&$3.898\,10^{-6}$&0.001&$1.610\,10^{-7}$\\\bottomrule
\end{tabular}
\end{table}
The temporal errors decrease by a factor close to four when the time step is halved. The observation errors decrease faster than the conservative first-order spatial energy bound proved above. The joint refinement table does not separate a pure spatial convergence rate. Neither table is a rigorous uniform error certificate over $\K$.

\begin{figure}[htbp]\centering
\includegraphics[width=.80\textwidth]{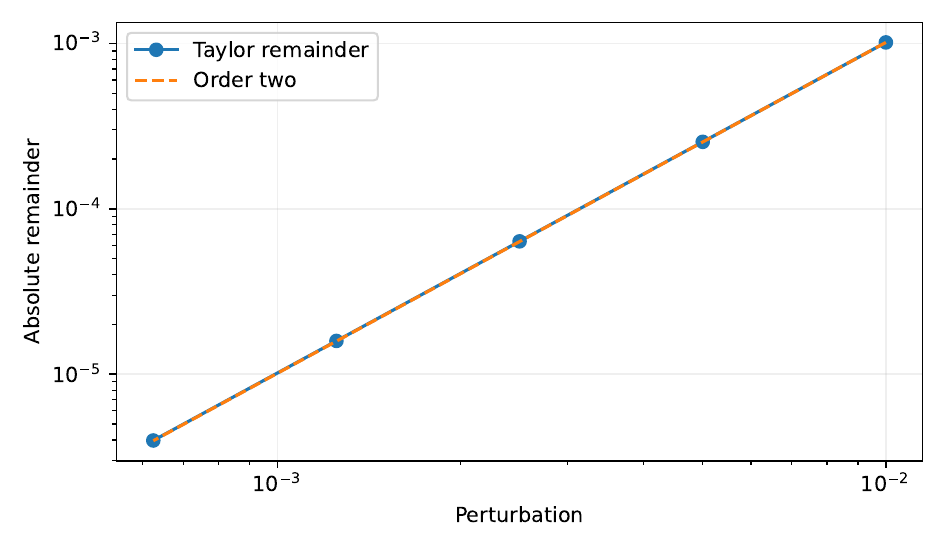}\par\medskip
\includegraphics[width=.80\textwidth]{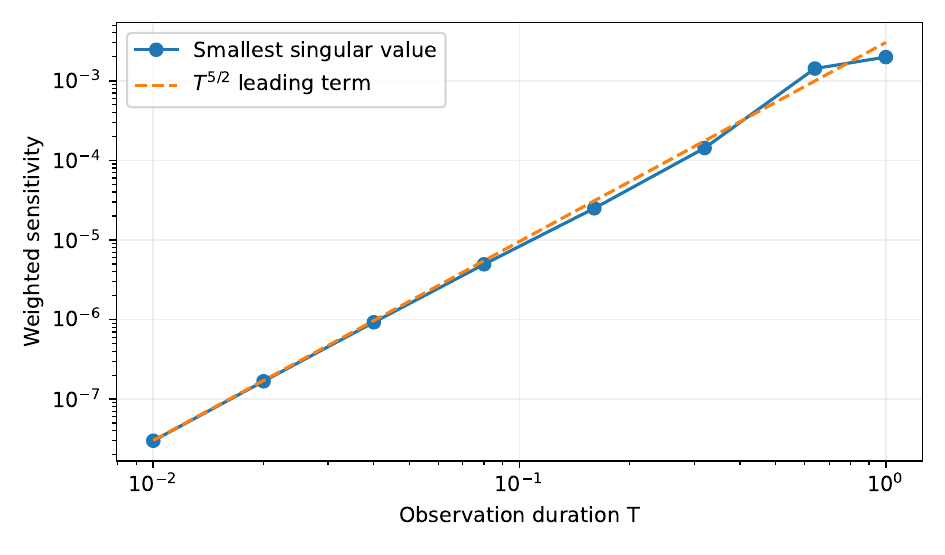}
\caption{Top: exact discrete-adjoint Taylor test. Bottom: the least weighted sensitivity singular value and its predicted short-time scale. This Jacobian is evaluated at $\eta=0$ on 96 elements with $\tau=0.00025$. The smallest plotted duration is $T=0.01$; the asymptotic comparison is evidence, not a verification of the conservative contraction condition at these durations.}\label{fig:checks}
\end{figure}

\subsection{Noise, regularization and longer-time optimization}
Let $S=\norm{F_{\rm fine}(\eta^\dagger)-F_{\rm fine}(0)}_{\tau}=0.00745801$ be the informative signal norm. Independent Gaussian arrays are rescaled to have prescribed trapezoidal norm $\nu S$ and added to the fine data. The baseline removes the large coefficient-independent initial response only when defining the noise scale; all objectives fit the complete observed history. Seeds and arrays are supplied. The full history has norm $0.152693$, so 1\% and 5\% of $S$ are only about $0.0488\%$ and $0.2442\%$ of the full observed history. We use three realizations at each positive noise level.

For numerical conditioning, the optimized objective is
$\norm{F_h(\eta)-y^\delta}_\tau^2/(2S^2)+10^{-6}|\eta|^2/2$.
This normalization differs from $T^{-5}$ in Theorem~\ref{thm:convex}; the discrete gradient incorporates the actual factor $S^{-2}$. We use bound-constrained L-BFGS-B with zero initial parameter, projected-gradient tolerance $10^{-9}$ and relative objective tolerance $10^{-14}$. Since $T=1$ is outside the certified short-time bounds, these are local optimization experiments.

\begin{table}[htbp]\centering
\caption{Reconstructions from a distinct finer data grid. Percentage error means $100|\widehat\eta-\eta^\dagger|/|\eta^\dagger|$.}\label{tab:reconstruction}
\begin{tabular}{rrrrrr}\toprule
Noise&Realization&$\widehat\eta_1$&$\widehat\eta_2$&$\widehat\eta_3$&Error (\%)\\\midrule
0\%&0&.181189&-.119695&.155589&2.174\\
1\%&0&.181112&-.121030&.155228&2.068\\
1\%&1&.181015&-.118819&.155188&2.058\\
1\%&2&.180855&-.119342&.154727&1.842\\
5\%&0&.180201&-.120494&.152404&.935\\
5\%&1&.180897&-.117214&.154766&2.124\\
5\%&2&.182195&-.113536&.157981&3.989\\\bottomrule
\end{tabular}
\end{table}
The fine/coarse data discrepancy at the truth is $0.571\%$ of $S$ and produces visible error even without added noise. Individual noisy estimates need not have larger error: some realizations partly cancel discretization bias. Three additional initial parameters converge to the same tested solution for the first 1\% realization. A finite-difference-gradient comparison reaches a similar solution with 124 objective evaluations, versus 24--27 evaluations for the additional adjoint starts. This comparison is specific to three unknown parameters.

For the first 5\% sample, penalties $0$, $10^{-4}$, $10^{-2}$ and $1$ give parameter errors $0.936\%$, $0.886\%$, $7.40\%$ and $75.3\%$, respectively. Strong penalties bias this example toward the zero prior. The noise level is known in these synthetic tests; the penalty values are prescribed in advance.

\begin{figure}[htbp]\centering
\includegraphics[width=.80\textwidth]{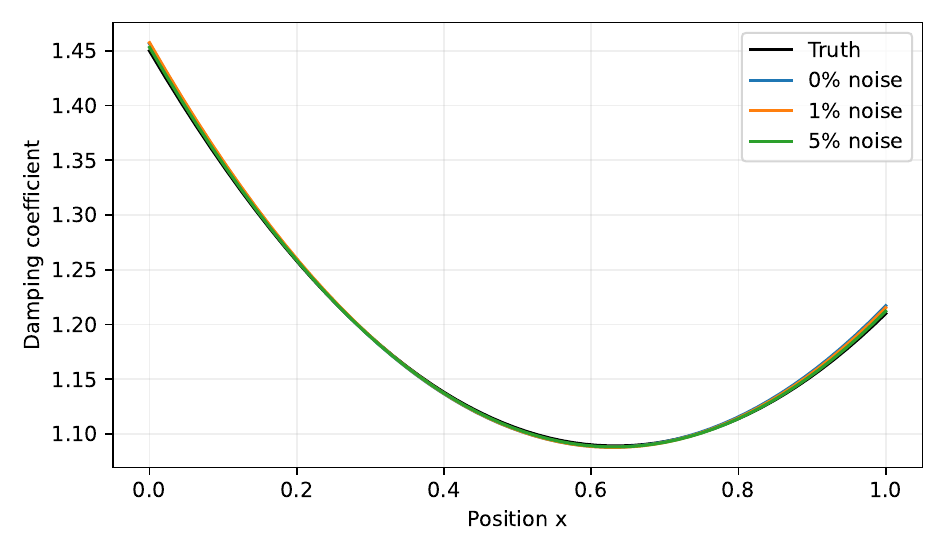}\par\medskip
\includegraphics[width=.80\textwidth]{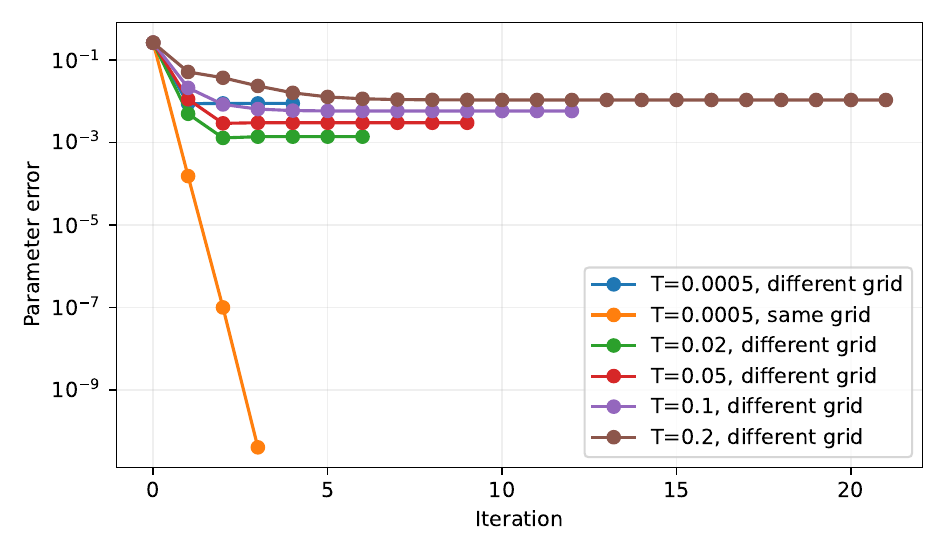}
\caption{Top: damping reconstructions for the first realization at each noise level. Bottom: moment iterations with and without a different data grid. Short-time contraction does not eliminate discretization bias. Longer windows are exploratory.}\label{fig:results}
\end{figure}

\subsection{Moment reconstruction and time-dependent noise amplification}
For the moment iteration, the inverse model has 64 elements; distinct-grid data use 128 elements and half the inverse time step. The inverse time step is $\min\{0.00025,T/200\}$. At $T=0.0005$, the analytic condition above certifies contraction for the exact continuum map. The approximate computed value of its conservative factor is $0.356$; the norm of the discrete iteration derivative at the final iterate is about $0.00066$. The latter is a local numerical quantity, not a uniform discrete contraction bound.

At this duration, same-grid synthetic data produce parameter error $5.43\times10^{-11}$, while distinct-grid data produce error $0.008879$ and a fine/coarse moment-data mismatch of $0.0003359$ at the truth. The same-grid calculation is an implementation check; it is not independent reconstruction validation. Distinct-grid moment runs also converge at $T=0.02,0.05,0.1,0.2$, but the sufficient continuum bound exceeds one there. Their observed convergence is not covered by the short-time theorem.

Finally, using same-grid data to isolate noise, we add a perturbation proportional to $t^2e_{\min}$, where $e_{\min}$ is a unit eigenvector of the smallest Gram eigenvalue, normalized to absolute history error $10^{-13}$. The parameter errors at $T=0.0005,0.001,0.002,0.004$ are respectively
$0.005901$, $0.001044$, $0.0001848$ and $0.00003274$.
The ratios $|\widehat\eta-\eta^\dagger|T^{5/2}/\delta$ range from $329.9$ to $331.4$, consistent with the leading value $\sqrt{20}\rho_2/\sigma\simeq329.7$. These nondimensional tests isolate the predicted time exponent.

\section{Scope and remaining questions}\label{sec:scope}
The pairwise weak-test expansion provides global recovery for a finite-dimensional damping family from prepared velocities and rotation averages. Its short-time remainder yields a moment reconstruction and quantitative noise bounds. The least-squares convexity result uses a separate sufficient condition.

Several restrictions remain substantial. Arbitrary initial states or unknown forcing can confound damping and are not estimated here. Rotation averages may require distributed sensing or post-processing of a resolved rotation field; they are not automatically supplied by a displacement sensor. The polynomial benchmark uses full spatial support. Finite-dimensional prior error, calibration error and imperfect preparation of initial velocities must be controlled before the theorem can serve as an experimental error bound. Proposition~\ref{prop:inexact} treats their aggregate effect only if a uniform observation-error bound is available.

The short-time constants are conservative and deteriorate with poor Gram conditioning. Better experiment design would maximize information subject to actuator and sensor constraints, rather than merely ensure invertibility. Extending the analysis to passive displacement measurements, proving useful dimension-dependent bounds, and validating the design with measured beam responses are further research problems.

\paragraph{Reproducibility.}
The accompanying source package contains the finite-element and discrete-adjoint code, fixed seeds, generated data, tables and figures. The experiments use Python, NumPy, SciPy and Matplotlib. The README gives the exact commands and recorded versions. No external dataset is required.

\paragraph{Use of computational assistance.}
ChatGPT (OpenAI) assisted with mathematical development, literature discovery, manuscript drafting and code generation. The reported numerical results were obtained by executing the supplied solver; gradient, energy and refinement checks and the exact rational certificate are supplied for inspection and reproduction.

\bibliographystyle{plainnat}
\bibliography{references}
\end{document}